\newcommand{\rarrow}[1]{{\buildrel #1 \over \longrightarrow}}
\newcommand{\Z}{\mathbb{Z}}
\newtheorem{thm}{Theorem}[section]
\newtheorem{prop}[thm]{Proposition}
\newtheorem{lem}[thm]{Lemma}
\newtheorem{Asser}[thm]{Assertion}
\newenvironment{proof}{\noindent{\bf Proof.}}
{\noindent \ \hfill$\Box$\par}
\begin{document}

\title{On Mimura's extension problem}

\author{Toshiyuki Miyauchi, Juno Mukai, Mariko Ohara \\}
\date{}
\maketitle

\noindent
{\bf Abstract} We determine the group strucure of the $23$-rd homotopy
group $\pi_{23}(G_2 : 2)$, where $G_2$ is the Lie group of exceptional type, which hasn't been determined for $50$
years. 
\footnote{2010 Mathematics Subject
Classification. Primary 57T20; Secondary 55R10.\\
Key words and phrases. Lie group of exceptional type, $\Z_2$-Moore
space, extension, whitehead product, Toda bracket.}

\section{Introduction}
Let $X$ be a space. Let $\pi_k(X:2)$ denote the $2$-primary or infinite
components
of the $k$-th homotopy group of $X$, i.e., its index $[\pi_k(X): \pi_k(X:2)]$ is odd.
In 1967, Mimura \cite[Theorem 9.1]{Mi2} showed that there are isomorphisms
\[
\pi_{23}(Spin(7);2)\cong\pi_{23}(G_2;2)\oplus \pi_{23}(S^7:2), 
\]
\[
\pi_{23}(Spin(9);2)\cong\pi_{23}(G_2;2)\oplus \pi_{23}(S^9:2),  
\]
where $G_2$ is the Lie
group of exceptional type. However, the structure of the $23$-rd homotopy
group $\pi_{23}(G_2 : 2)$ hasn't been determined completely :   

\begin{Asser}[Mimura's problem~\cite{Mi2}, cf. \cite{DP}; p.369, cf. \cite{Lu}]\label{mimuprob}
\[
 \pi_{23}(G_2:2)\cong \Z_4 \oplus \Z_2 \quad \text{or} \quad (\Z_2)^3 .
\] 
\end{Asser}
The aim of this paper is to determine the structure of $\pi_{23}(G_2 :
2)$. 

To explain our strategy, we will state Assertion~\ref{mimuprob} more
explicitely along \cite{Mi2}. 
Let $SU(n)$ be the $n$-th special unitary group. 
Let $SU(3) \rarrow{i_G} G_2 \rarrow{p_G}S^6$ and $SU(2)\rarrow{i_U}SU(3)\rarrow{p_U}S^5$ be the canonical fiber sequences. 
Let $\langle \beta \rangle \in \pi_k(G_2 : 2)$ be an element satisfying $p_{G
\ast}\langle \beta \rangle =\beta$ for $\beta \in \pi_k(S^6 : 2)$, and
$[\alpha] \in \pi_k(SU(3) : 2)$ an element satisfying $p_{U \ast}[\alpha]
 =\alpha$ for $\alpha \in \pi_k(S^5 :2)$. 
By using these sequences, Mimura obtained the explicit elements that
generate $\pi_{23}(G_2:2)$ as follows. 

\begin{prop}[\cite{Mi2}]\label{G20}
Let $G$ be an abelian group spanned by the elements $\langle
 P(E\theta)+\nu_6\kappa_9\rangle$ and
 $i_{G\ast}[\nu_5\bar{\varepsilon}_8]$. Then, 
$\pi_{23}(G_2:2)
\cong G\oplus\Z_2\{\langle\eta_6\mu_7\rangle\sigma_{16}\}$,
where $G=\Z_4$ or $(\Z_2)^2$. If $2\langle
 P(E\theta)+\nu_6\kappa_9\rangle=i_{G \ast}[\nu_5\bar{\varepsilon}_8]$, then $G=\Z_4$. 
\end{prop}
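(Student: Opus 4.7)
The plan is to run the long exact homotopy sequence of the principal $SU(3)$-fibration $SU(3)\xrightarrow{i_G}G_2\xrightarrow{p_G}S^6$,
\[
\pi_{24}(S^6:2)\xrightarrow{\Delta_G}\pi_{23}(SU(3):2)\xrightarrow{i_{G\ast}}\pi_{23}(G_2:2)\xrightarrow{p_{G\ast}}\pi_{23}(S^6:2)\xrightarrow{\Delta_G}\pi_{22}(SU(3):2),
\]
which presents $\pi_{23}(G_2:2)$ as an extension of $\Ker\Delta_G\subset\pi_{23}(S^6:2)$ by $\Coker\Delta_G\subset\pi_{23}(SU(3):2)$. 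The required inputs are the $2$-primary homotopy groups of $S^5$, $S^6$, and $SU(3)$---the last obtained from the companion fibration $S^3=SU(2)\to SU(3)\to S^5$ together with Toda's tables---the connecting maps $\Delta_G$, which are given by Samelson products with the characteristic class of the $G_2$-bundle, and an identification of distinguished generators.

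First I would compute $\pi_{22}(SU(3):2)$ and $\pi_{23}(SU(3):2)$ from the long exact sequence of $SU(2)\to SU(3)\to S^5$, in particular exhibiting the lift $[\nu_5\bar{\varepsilon}_8]\in\pi_{23}(SU(3):2)$ of $\nu_5\bar{\varepsilon}_8\in\pi_{23}(S^5:2)$. Next I would enumerate the generators of $\pi_{23}(S^6:2)$ and evaluate $\Delta_G$ on each. The role of the Whitehead-type correction $P(E\theta)$ is precisely to adjust $\nu_6\kappa_9$ so that the sum lies in $\Ker\Delta_G$, producing the lift $\langle P(E\theta)+\nu_6\kappa_9\rangle$; moreover $\eta_6\mu_7$ already lifts to $\pi_{16}(G_2:2)$, so the composite $\langle\eta_6\mu_7\rangle\sigma_{16}$ furnishes the remaining generator. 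Dually, computing the image of $\Delta_G\colon\pi_{24}(S^6:2)\to\pi_{23}(SU(3):2)$ should verify that $[\nu_5\bar{\varepsilon}_8]$ is the unique nontrivial class in $\Coker\Delta_G$.

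The direct summand $\Z_2\{\langle\eta_6\mu_7\rangle\sigma_{16}\}$ splits off because its image $\eta_6\mu_7\sigma_{16}\in\pi_{23}(S^6:2)$ has order $2$ and is independent of $P(E\theta)+\nu_6\kappa_9$ there; a retraction is obtained by projecting onto the corresponding summand of $\pi_{23}(S^6:2)$ and lifting through $p_{G\ast}$. The complementary subgroup $G$, generated by $\langle P(E\theta)+\nu_6\kappa_9\rangle$ and $i_{G\ast}[\nu_5\bar{\varepsilon}_8]$, therefore has order at most $8$.

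The main obstacle is the extension problem determining the isomorphism type of $G$. Since $P(E\theta)+\nu_6\kappa_9$ has order $2$ in $\pi_{23}(S^6:2)$, the class $2\langle P(E\theta)+\nu_6\kappa_9\rangle$ lies in $\Ker p_{G\ast}=\Im(i_{G\ast})=\Z_2\{i_{G\ast}[\nu_5\bar{\varepsilon}_8]\}$ and is therefore either $0$ or $i_{G\ast}[\nu_5\bar{\varepsilon}_8]$. The former case gives $G\cong(\Z_2)^2$; the latter forces $\langle P(E\theta)+\nu_6\kappa_9\rangle$ to have order $4$ and $G\cong\Z_4$. The long exact sequence cannot on its own distinguish the two alternatives, which is exactly what the proposition records; resolving the dichotomy is the goal of the paper's main theorem.
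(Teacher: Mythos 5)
The paper states this proposition without proof, citing Mimura \cite{Mi2}, and your outline --- the long exact sequence of $SU(3)\to G_2\to S^6$ fed by the companion fibration $SU(2)\to SU(3)\to S^5$, identification of $\Ker\Delta_G$ and $\Coker\Delta_G$, splitting off $\Z_2\{\langle\eta_6\mu_7\rangle\sigma_{16}\}$, and reduction of the extension problem to whether $2\langle P(E\theta)+\nu_6\kappa_9\rangle$ equals $i_{G\ast}[\nu_5\bar{\varepsilon}_8]$ in $\Im i_{G\ast}\cong\Z_2$ --- is exactly the argument of the cited source. Be aware that what you have written is a plan rather than a proof: the substantive computations (the groups $\pi_{23}(SU(3):2)$, $\pi_{23}(S^6:2)$, $\pi_{24}(S^6:2)$ and the values of $\Delta_G$ on their generators, e.g.\ that $\Delta_G(\zeta_6\sigma_{17})=[2\iota_5]\zeta_5\sigma_{16}$ kills the $\Z_4$ summand of $\pi_{23}(SU(3):2)$ so that $\Coker\Delta_G\cong\Z_2\{[\nu_5\bar{\varepsilon}_8]\}$) are all deferred, but the skeleton and the concluding dichotomy are correct.
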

Mimura constructed the lift $\langle P(E\theta)+\nu_6\kappa_9 \rangle$
 by using the equation $\Delta (P(E\theta))$ $=$ $ \Delta
 (\nu_6\kappa_9)$. 
We determine that $G=\Z_4$ by showing the relation $2\langle
 P(E\theta)+\nu_6\kappa_9\rangle=i_{G \ast}[\nu_5\bar{\varepsilon}_8]$.  
Let $M^n$ be the Moore space $M^n=S^{n-1} \cup_{2\iota_{n-1}} e^n$ of type $(\Z_2,n-1)$, and $[M^n, S^k]$ the
cohomotopy group of a Moore space. 
Our essential idea is to find the Toda brackets represented by
 extensions of $P(E\theta)$ and $\nu_6\kappa_9$ over $M^{24}$ (Lemmas 3.1
 and 3.2),
 respectively. The crucial consequence of our method is Theorem 4.8. By
 using this theorem, we have the following. 
\begin{thm}\label{pi23G_2}
$2\langle P(E\theta)+\nu_6\kappa_9\rangle=i_{G\ast}[\nu_5\bar{\varepsilon}_8]
 \in \pi_{23}(G_2:2)$. 
\end{thm}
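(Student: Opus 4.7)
My strategy is to convert the doubling $2\langle\beta\rangle$, with $\beta:=P(E\theta)+\nu_6\kappa_9$, into a secondary invariant living in $\pi_{23}(SU(3):2)$, via extensions of $\beta$ over the Moore space $M^{24}$. Both summands of $\beta$ are $2$-torsion, so $p_{G\ast}(2\langle\beta\rangle)=2\beta=0$, and exactness of
\[
\pi_{23}(SU(3):2)\xrightarrow{i_{G\ast}}\pi_{23}(G_2:2)\xrightarrow{p_{G\ast}}\pi_{23}(S^6:2)
\]
places $2\langle\beta\rangle$ in $\Im(i_{G\ast})$. The task is then to pin down \emph{which} element of $\pi_{23}(SU(3):2)$ it comes from, and the mechanism is the standard one: lifting an extension $\tilde\beta\colon M^{24}\to S^6$ of $\beta$ along $p_G$ is obstructed by a class in the fibre that realises $2\langle\beta\rangle$ as a Toda bracket involving $\tilde\beta$, the degree-$2$ map on $S^{23}$, and the connecting data of the fibration.

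Concretely I would proceed in three steps. First, invoke Lemmas 3.1 and 3.2, which identify extensions $\widetilde{P(E\theta)}$ and $\widetilde{\nu_6\kappa_9}$ of the two summands of $\beta$ over $M^{24}$ as specific Toda brackets in $[M^{24},S^6]$; adding them gives an extension $\tilde\beta$ of $\beta$. Second, feed $\tilde\beta$ through Theorem 4.8, the crucial tool of Section 4, to produce a concrete element $\gamma\in\pi_{23}(SU(3):2)$ with $i_{G\ast}\gamma=2\langle\beta\rangle$. Third, identify $\gamma$ with $[\nu_5\bar{\varepsilon}_8]$ by computing $p_{U\ast}\gamma$ using the description of $\pi_{23}(SU(3):2)$ coming from the fibration $SU(2)\to SU(3)\to S^5$ and the known class $\nu_5\bar{\varepsilon}_8\in\pi_{23}(S^5:2)$.

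The main obstacle is controlling the Toda bracket indeterminacies so that the combined bracket lands \emph{unambiguously} on $[\nu_5\bar{\varepsilon}_8]$ rather than on $[\nu_5\bar{\varepsilon}_8]+i_{U\ast}\xi$ for some unwanted $\xi\in\pi_{23}(SU(2):2)$. Equivalently, Theorem 4.8 must pin $\gamma$ down modulo $\Ker(p_{U\ast})=\Im(i_{U\ast})$; by Proposition~\ref{G20}, any slack here would leave open the possibility $G=(\Z_2)^2$, so this indeterminacy analysis — not the existence of a lift — is the essential content of the proof, and it is where the explicit form of the extensions furnished by Lemmas 3.1--3.2 has to be used in full.
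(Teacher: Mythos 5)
Your overall strategy coincides with the paper's: extend both summands of $\beta=P(E\theta)+\nu_6\kappa_9$ over $M^{24}$ and identify the extensions with Toda brackets (Lemmas~\ref{san} and~\ref{322}), push the sum through the connecting map $\Delta\colon[M^{24},S^6]\to[M^{23},SU(3)]$ of the fibration $SU(3)\to G_2\to S^6$, and detect the answer by $p_{U\ast}$ in $\pi_{23}(S^5:2)$, where $\nu_5\bar\varepsilon_8$ generates a $\Z_2$ summand; you also correctly locate the real work in controlling the bracket indeterminacies modulo terms killed by $p_{U\ast}\Delta$.

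The one place where your plan diverges from the paper, and where it has a genuine gap, is the endgame. You assert that Theorem~\ref{M1} produces an element $\gamma\in\pi_{23}(SU(3):2)$ with $i_{G\ast}\gamma=2\langle\beta\rangle$. What Theorem~\ref{M1} actually delivers is the equality $\Delta(\overline{P(E\theta)}+\overline{\nu_6\kappa_9})=[\nu_5\bar\varepsilon_8]p'_{23}+b[2\iota_5]\zeta_5\sigma_{16}p'_{23}$ in $[M^{23},SU(3)]$. Passing from this to the identity $i_{G\ast}\gamma=2\langle\beta\rangle$ would require a general obstruction-theoretic lemma relating $\Delta$ of a Moore-space extension of $\beta$ to twice a chosen lift of $\beta$, with the sign and the dependence on the choices of lift and extension all controlled; you neither state nor prove such a lemma, and it is exactly the kind of statement whose indeterminacy could reintroduce the ambiguity you are trying to kill. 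The paper avoids this entirely: applying $i_{G\ast}$ to the displayed equality and using $i_{G\ast}\circ\Delta=0$ gives $i_{G\ast}[\nu_5\bar\varepsilon_8]\circ p'_{23}=0$, whence by the cofibre sequence (\ref{cohomotopy}) the element $i_{G\ast}[\nu_5\bar\varepsilon_8]$ lies in $2\pi_{23}(G_2:2)$; since a nonzero element divisible by $2$ is impossible when $G\cong(\Z_2)^2$, Proposition~\ref{G20} forces $G\cong\Z_4$ and $2\langle\beta\rangle=i_{G\ast}[\nu_5\bar\varepsilon_8]$. You should either supply the missing obstruction lemma or, more simply, replace your second step by this divisibility argument.
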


We use the notations
and results of \cite{Mi2} and \cite{T} freely. We write $\pi_k^n$ for
$\pi_{k}(S^n :2)$. 
We also denote by $\Delta: [M^{24}, S^6] \to
[M^{23}, SU(3)]$ 
 the connecting map induced by the fibration $G_2 \to G_2 / SU(3) =S^6$. 
 
For the element
$\kappa_7$, we adopt the renamed one by the equation $2\kappa_7 =
\bar\nu_7\nu^2_{15}$ in ~\cite[(15.5)]{MT1}. 
For a group $H$ and $\alpha\in H$, we denote by $\sharp\alpha$ the order of $\alpha$. 
\section{Some cohomotopy groups of $M^n$}
We have a cofiber sequence
\begin{equation}\label{cohomotopy}
\cdots\rarrow{p'_{n}}S^{n}\rarrow{2\iota_{n}}S^{n}
\rarrow{i'_{n+1}}M^{n+1}\rarrow{p'_{n+1}}S^{n+1}
\rarrow{2\iota_{n+1}}S^{n+1}\rarrow{i'_{n+2}}\cdots,
\end{equation}
where $i'_n:S^{n-1}\hookrightarrow M^n$ is the inclusion and 
$p'_n: M^n\to S^n$ is the collapsing map. 

By using the exact sequence induced from (\ref{cohomotopy}) 
 and the fact that $\pi^n_{n+1}=\{\eta_n\} \cong \Z_2$ for $n \ge 3$,  
we obtain
\begin{equation}\label{M20}
[M^{n+1},S^{n}]=\Z_2\{\eta_{n}p'_{n+1}\}
\quad\text{for}\quad n \ge 3 .
\end{equation}
By using the exact sequence induced from (\ref{cohomotopy}) 
and the fact that $\pi^n_{n+6}=\{\nu^2_n\} \cong \Z_2$ for $n \ge 5$, we obtain 
\begin{equation}\label{M201}
[M^{n+6},S^{n}]=\Z_2\{\nu^2_{n}p'_{n+6}\} \quad\text{for}\quad n \ge 6 .
\end{equation}

Let $\bar{\eta}_n\in[M^{n+2},S^n]$ and \mbox{$\tilde{\eta}_n\in\pi_{n+2}(M^{n+1})$} 
for $n\ge 3$ be an extension and a coextension 
of $\eta_n$, respectively.
We note that
\[
\bar{\eta}_n\in\{\eta_n,2\iota_{n+1},p'_{n+1}\}\ \ \text{and}\ \ 
\tilde{\eta}_n\in\{i'_{n+1},2\iota_n,\eta_n\}, 
\;\mbox{for}\; n\ge 3.
\]
We have
$$
2\bar{\eta}_n=\eta^2_np'_{n+2}\;\;\mbox{and}\;\;
2\tilde{\eta}_n=i'_{n+1}\eta^2_n,\; \mbox{for}\; 
n\ge 3
$$
and
\begin{equation}\label{hoco}
[M^{n+2}, S^n]=\Z_4\{\bar\eta_n\} \quad \text{and} \quad
 \pi_{n+2}(M^{n+1})=\Z_4\{\tilde{\eta}_n\} \quad \text{for} \quad n
 \ge 3 .
\end{equation}
Since the indeterminacy of $\{\eta_n, 2\iota_{n+1}, p'_{n+1}\}$ is
$\eta_n \circ [M^{n+2}, S^{n+1}]+\pi^n_{n+2} \circ
p'_{n+2}=\{\eta^2_np'_{n+2}\}$ by \cite[Proposition 5.3]{T} and (\ref{M20}), we obtain 
\begin{equation}\label{eet}
\{\eta_n, 2\iota_{n+1}, p'_{n+1}\}=\bar\eta_n + 2\{\bar\eta_n\} \quad
 \text{for} \quad n \ge 3. 
\end{equation}
By the definition of $\nu'$ \cite[p.40]{T} and \cite[Proposition 1.7, Lemma 5.4]{T} that
\begin{equation}\label{bntn}
\bar\eta_3 \tilde{\eta}_4 = \pm \nu' 
\quad \text{and} \quad 
\bar{\eta}_{n}\tilde{\eta}_{n+1}=\pm 2\nu_{n} \quad
 \text{for} \quad n \ge 5. 
\end{equation}
Let $\iota_{M^n}$ be the homotopy class of the identity map of 
$M^n$. 
In view of \cite[p. 307, Corollary]{T1}, it holds
\begin{equation} \label{ietap}
2\iota_{M^n}=i'_n\eta_{n-1}p'_n, \;\mbox{for}\; n\ge 4.
\end{equation}
Note that $[M^n, M^n]=\Z_4\{\iota_{M^n}\}$ for $n \ge 4$. 

By the relations $\nu_6\eta_9=0$ \cite[(5.9)]{T} and $\{\nu_6, \eta_9,
2\iota_{10}\}_1 = P(\iota_{13})+2\{P(\iota_{13})\}$ ~\cite[Lemma
5.10]{T}, we have 
\[
\nu_6\bar\eta_9 \in \nu_6 \circ \{\eta_9, 2\iota_{10},
p'_{10}\}_1 = \{\nu_6, \eta_9, 2\iota_{10}\} \circ p'_{11} = P(p'_{13})
\]
that is, 
we have the relations 
\begin{equation}\label{j3}
\nu_6\bar\eta_9 =P(p'_{13}), \quad \nu_n\bar\eta_{n+3}=0 \quad (n \ge
 7). 
\end{equation}

Let $\overline{\nu^2}_5 \in [M^{12}, S^5]$ be an extension of $\nu^2_5$. 
We set $\overline{\nu^2}_{n}=E^{n-5}\overline{\nu^2}_{5}$ for $n\ge 5$.
Then, we have the following : 
\begin{lem}\label{21}
\begin{itemize}
\item[(1)] $\sharp (\overline{\nu^2}_n) =2$ for $n \ge 5$
\item[(2)] $\{\eta_n, \nu_{n+1}, \overline{\eta}_{n+4}\}_1 = \overline{\nu^2}_n$ for $n \ge7$
\item[(3)] $\eta_n\overline{\nu^2}_{n+1}=\varepsilon_np'_{n+8}$
 for $n \ge 5$.
\end{itemize}
\end{lem}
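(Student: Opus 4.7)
We address the three parts of Lemma~\ref{21} by different but related arguments, each exploiting the cofiber-sequence presentation of $M^n$. For part (1), the plan is to combine the defining relation $\overline{\nu^2}_n\circ i'_{n+7}=\nu^2_n$ with the identity (\ref{ietap}):
\[
2\overline{\nu^2}_n=\overline{\nu^2}_n\circ 2\iota_{M^{n+7}}=\overline{\nu^2}_n\,i'_{n+7}\,\eta_{n+6}\,p'_{n+7}=\nu^2_n\,\eta_{n+6}\,p'_{n+7}.
\]
Since $\nu_{n+3}\eta_{n+6}=0$ for $n\ge 5$ (from $\eta\nu=0$, cf.\ \cite[(5.9)]{T}), the right-hand side vanishes, hence $2\overline{\nu^2}_n=0$. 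Because $\overline{\nu^2}_n$ restricts along $i'_{n+7}$ to the nonzero class $\nu^2_n\in\pi^n_{n+6}$, it is itself nonzero, so $\sharp(\overline{\nu^2}_n)=2$.

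For part (2), the plan is a juggling argument, using that $\bar\eta_{n+4}\in\{\eta_{n+4},2\iota_{n+5},p'_{n+5}\}$ (see the discussion preceding (\ref{eet})) together with Toda's juggling identity \cite[Proposition 1.4]{T}. Since restriction along $i'_{n+6}\colon S^{n+5}\hookrightarrow M^{n+6}$ sends $\bar\eta_{n+4}$ to $\eta_{n+4}$, pre-composition with $i'_{n+7}$ yields
\[
\{\eta_n,\nu_{n+1},\bar\eta_{n+4}\}_1\circ i'_{n+7}\;\equiv\;\{\eta_n,\nu_{n+1},\eta_{n+4}\}_1
\]
modulo indeterminacy. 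The inner bracket is a classical unstable bracket known to contain $\nu^2_n\in\pi^n_{n+6}:2=\Z_2\{\nu^2_n\}$, so $\{\eta_n,\nu_{n+1},\bar\eta_{n+4}\}_1$ is an extension of $\nu^2_n$ over $M^{n+7}$. By part (1), the set of such extensions is $\overline{\nu^2}_n+\pi^n_{n+7}\circ p'_{n+7}$, and a comparison with the outer-bracket indeterminacy forces equality with $\overline{\nu^2}_n$.

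For part (3), first note that $\eta_n\nu^2_{n+1}=0$, so $\eta_n\overline{\nu^2}_{n+1}\circ i'_{n+8}=0$ and hence $\eta_n\overline{\nu^2}_{n+1}=\beta_n\,p'_{n+8}$ for some $\beta_n\in\pi^n_{n+8}$. Viewing $\overline{\nu^2}_{n+1}$ as the extension Toda bracket $\{\nu^2_{n+1},2\iota_{n+7},p'_{n+7}\}$ and applying the juggling identity $\alpha\circ\{\beta,\gamma,\delta\}\equiv -\{\alpha,\beta,\gamma\}\circ\Sigma\delta$ produces $\beta_n\in\{\eta_n,\nu^2_{n+1},2\iota_{n+7}\}$ modulo indeterminacy. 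A standard Toda-bracket representation of $\varepsilon_n$ (compare Toda's definition of $\varepsilon$ in \cite[Chap.~VII]{T}) places $\varepsilon_n$ inside this bracket, and a careful count of indeterminacy against the splitting $\pi^n_{n+8}:2=\Z_2\{\varepsilon_n\}\oplus\Z_2\{\bar\nu_n\}$ pins $\beta_n=\varepsilon_n$. The principal obstacle lies in this last step: since the ambient $\pi^n_{n+8}:2$ has rank two, distinguishing $\varepsilon_n$ from $\varepsilon_n+\bar\nu_n$ requires both a precise Toda-bracket identity for $\varepsilon_n$ and careful indeterminacy tracking through the juggling, whereas (1) and (2) are routine once the cofiber-sequence formalism is in place.
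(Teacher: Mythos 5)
Your part (1) is correct and is exactly the paper's argument (the paper runs it at $n=5$ and suspends, since $\overline{\nu^2}_n=E^{n-5}\overline{\nu^2}_5$ by definition; the order is exactly $2$ because the restriction to the bottom cell is $\nu^2_n\neq 0$). The problems are in parts (2) and (3), where in both cases your plan ends with ``the remaining ambiguity is killed by indeterminacy bookkeeping,'' but the ambiguity does not lie in the indeterminacy. For (2): knowing that $\{\eta_n,\nu_{n+1},\bar\eta_{n+4}\}_1$ restricts to $\nu^2_n$ on the bottom cell only determines it modulo $\pi^n_{n+7}\circ p'_{n+7}$, and at $n=7$ this is $\Z_2\{\sigma'p'_{14}\}\neq 0$ (note $2(\sigma'p'_{14})=\sigma'\circ 2\iota_{14}\circ p'_{14}=0$ but $\sigma'p'_{14}\neq 0$ since $\sigma'\notin 2\pi^7_{14}$), whereas the indeterminacy of the outer bracket is $\eta_7\circ E[M^{13},S^8]+\pi^7_{12}\circ\bar\eta_{12}=0$. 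So your comparison cannot detect a possible extra summand $\sigma'p'_{14}$, and the equality is not established. The paper removes this term by running the whole computation one dimension lower: at $n=6$ the ambiguity is $\sigma''p'_{13}$, and upon suspension $E(\sigma''p'_{13})=(E\sigma'')p'_{14}=2\sigma'p'_{14}=0$ because $E\sigma''=2\sigma'$. Along the way it must also check that $\pi^6_{11}\circ\eta_{11}=\{P(\eta_{13})\}=0$, which is an honest unstable input you would not see working only at $n\ge 7$.

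For (3) you flag the obstacle yourself --- distinguishing $\varepsilon_n$ from $\varepsilon_n+\bar\nu_n$ inside $\pi^n_{n+8}\cong\Z_2\{\varepsilon_n\}\oplus\Z_2\{\bar\nu_n\}$ for $n\ge 6$ --- but you do not resolve it, so as written this part is a plan rather than a proof. The paper's resolution is again desuspension: at $n=5$ there is no $\bar\nu_5$ and $\pi^5_{13}=\Z_2\{\varepsilon_5\}$ with $\varepsilon_5=\{\eta_5,\nu^2_6,2\iota_{12}\}$, and the relevant indeterminacy $\eta_5\circ\pi^6_{13}\circ p'_{13}=\{\eta_5\sigma''p'_{13}\}$ vanishes because $\eta_5\sigma''=0$; this gives $\eta_5\overline{\nu^2}_6=\varepsilon_5p'_{13}$ on the nose, and the cases $n\ge 6$ follow by suspension since $\overline{\nu^2}_{n+1}=E^{n-5}\overline{\nu^2}_6$. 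I recommend restructuring both (2) and (3) around this ``prove it at the lowest admissible dimension, then suspend'' device; it is precisely what annihilates the extra summands ($\sigma'p'_{14}$ in (2), $\bar\nu_n$ in (3)) that your argument leaves uncontrolled.
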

\begin{proof}
(1) By (\ref{ietap}), we have
 $2\overline{\nu^2}_5=\overline{\nu^2}_5 \circ
 2\iota_{M^{12}}=\overline{\nu^2}_5 \circ i_{12}
 \eta_{11}p'_{12}=\nu^2_5\eta_{11}p'_{12}=0$. 

(2) Recall the relations $\eta_5\nu_6=0$~\cite[(5.9)]{T} and $\nu^2_6=\{\eta_6,\nu_7,\eta_{10}\}$~\cite[Lemma 5.12]{T}. 
We have
\begin{align*}
\overline{\nu^2}_{6} \circ i'_{13}=\nu^2_6
=\{\eta_6,\nu_7,\eta_{10}\}
&= \{\eta_6,\nu_7,\bar\eta_{10}\circ i'_{12}\}
\supset\{\eta_6,\nu_7,\bar\eta_{10}\}\circ i'_{13}\\
&\bmod \eta_6 \circ \pi^7_{12}+\pi_{11}^6 \circ \eta_{11} . 
\end{align*}
Since $\pi^7_{12}=0$, we have $\eta_6 \circ \pi^7_{12}=0$. 
From the fact that $\pi^6_{11}=\{P(\iota_{13})\} \cong \Z$~\cite[Proposition 5.9]{T} 
, we have $\pi^6_{11} \circ \eta_{11} =\{P(\eta_{13})\}$. 
Since $E: \pi^6_{12} \to \pi^7_{13}$ is an
 isomorphism~\cite[Proposition 5.11]{T}, we obtain $P(\eta_{13})=0$. 
Hence we have $\overline{\nu^2}_6 \circ i'_{13}= \{\eta_6, \nu_7,
 \bar{\eta}_{10}\} \circ i'_{13}$. By using
 $\pi^6_{13}=\Z_4\{\sigma''\}$~\cite[Proposition 5.15]{T}, we obtain
 that $\{\eta_6, \nu_7, \overline{\eta}_{10}\} \equiv \overline{\nu^2}_6 \bmod \sigma^{''}p'_{13}$ .  
This implies $\{\eta_7, \nu_{10}, \bar\eta_{13}\}_1 = E\{\eta_6, \nu_9,
 \bar\eta_{12}\} \equiv E\overline{\nu^2}_6 =\overline{\nu^2}_7 \bmod E(\sigma''p'_{13}) =2\sigma'\circ
 p'_{14}=0$ because $E\sigma''=2\sigma'$~\cite[Lemma 5.14]{T}. 

(3) We know that $\pi^5_{13}=\{\varepsilon_5\} \cong \Z_2$ and so, $\varepsilon_5=\{\eta_5, \nu^2_6, 2\iota_{12}\}$
~\cite[(6.1) Theorem 7.1]{T}. We take $\overline{\nu^2}_6 \in \{\nu^2_6,
 2\iota_{12}, p'_{12}\}$. So, from the relation $\eta_5\sigma''
 =0$~\cite[(7.4)]{T}, 
we see that $\eta_5\overline{\nu^2}_6 \in \eta_5 \circ \{\nu^2_6, 2\iota_{12},
 p'_{12}\}=\{\eta_5, \nu^2_6, 2\iota_{12}\} \circ
 p'_{13}=\varepsilon_5p'_{13} \bmod \eta_5 \circ \pi^6_{13} \circ
 p'_{13}=\{\eta_5\sigma''p'_{13}\}=0$. 
This leads to the assertion. 
\end{proof}

\begin{lem}\label{mato}
$\varepsilon_5\overline{\nu^2}_{13}=\bar{\varepsilon}_5p'_{20}$
\end{lem}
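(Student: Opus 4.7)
The plan is to mirror the proof of Lemma~\ref{21}(3). First, I would take $\overline{\nu^2}_{13}$ as a representative of the Toda bracket $\{\nu^2_{13},2\iota_{19},p'_{19}\}\subset[M^{20},S^{13}]$, which is the standard Toda-bracket description of the extension of a two-torsion element over a Moore space, exactly paralleling the choice $\overline{\nu^2}_6\in\{\nu^2_6,2\iota_{12},p'_{12}\}$ used earlier. Next, I would recall from \cite{T} that $\bar\varepsilon_5\in\pi^5_{20}$ lies in the Toda bracket $\{\varepsilon_5,\nu^2_{13},2\iota_{19}\}$, which is defined since $\varepsilon_5\nu^2_{13}=0$ (from the stable relation $\varepsilon\nu=0$) and $2\nu^2_{13}=0$.

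The key computation is then the secondary-composition shuffle
\[
\varepsilon_5\circ\{\nu^2_{13},2\iota_{19},p'_{19}\}\ \equiv\ \{\varepsilon_5,\nu^2_{13},2\iota_{19}\}\circ p'_{20}\quad(\text{mod indeterminacy}),
\]
which, applied to the extension representative, yields $\varepsilon_5\overline{\nu^2}_{13}\equiv\bar\varepsilon_5 p'_{20}$ in $[M^{20},S^5]$. This is the exact one-level-up analog of the shuffle $\eta_5\circ\{\nu^2_6,2\iota_{12},p'_{12}\}=\{\eta_5,\nu^2_6,2\iota_{12}\}\circ p'_{13}=\varepsilon_5 p'_{13}$ in Lemma~\ref{21}(3), with $(\eta_5,\varepsilon_5)$ replaced by $(\varepsilon_5,\bar\varepsilon_5)$.

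What remains is to verify that the indeterminacy vanishes in $[M^{20},S^5]$. It splits as
\[
\pi^5_{19}\circ(2\iota_{19}\circ p'_{20})+\varepsilon_5\circ\pi^{13}_{20}\circ p'_{20}.
\]
The first summand vanishes by (\ref{ietap}), because $2\iota_{19}\circ p'_{20}=p'_{20}\circ 2\iota_{M^{20}}=p'_{20}i'_{20}\eta_{19}p'_{20}=0$, in complete analogy with the corresponding step of Lemma~\ref{21}(3). The second summand, using $\pi^{13}_{20}=\{\sigma_{13}\}$, reduces to the single term $\varepsilon_5\sigma_{13}p'_{20}$.

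The main obstacle will be disposing of this residual term $\varepsilon_5\sigma_{13}p'_{20}$: the clean case is a known relation $\varepsilon_5\sigma_{13}=0$ in $\pi^5_{20}$ extracted from~\cite{T}, but if the composition is non-zero one would instead need to show it is absorbed into $\bar\varepsilon_5 p'_{20}$ (which is consistent with $\sharp\bar\varepsilon_5=2$). A secondary technical point, which I would address at the outset, is pinning down the exact Toda-bracket representative for $\bar\varepsilon_5$ used in~\cite{T}: if the definition there uses a permuted bracket such as $\{\varepsilon_5,2\iota_{13},\nu^2_{13}\}$ or $\{\nu^2_5,2\iota_{11},\varepsilon_{11}\}$, a short symmetry argument for three-fold brackets with two-torsion entries will be required to match it with $\{\varepsilon_5,\nu^2_{13},2\iota_{19}\}$.
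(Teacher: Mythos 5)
Your proposal is correct and follows essentially the same route as the paper: the paper also writes $\bar\varepsilon_5 p'_{20}=\{\varepsilon_5,\nu^2_{13},2\iota_{19}\}_1\circ p'_{20}=\varepsilon_5\circ E\{\nu^2_{12},2\iota_{18},p'_{18}\}\ni\varepsilon_5\overline{\nu^2}_{13}$ and kills the indeterminacy via $\varepsilon_5\sigma_{13}=0$ (\cite[Lemma 10.7]{T}), so your ``clean case'' is the one that occurs. The only point you left open --- the exact bracket representation of $\bar\varepsilon_5$ --- is settled in the paper by citing \cite[III-Proposition 2.3 (5)]{Od2} for $\bar\varepsilon_5=\{\varepsilon_5,\nu^2_{13},2\iota_{19}\}_1$, so no permutation argument is needed.
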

\begin{proof}
Since $\bar{\varepsilon}_5=\{\varepsilon_5,\nu^2_{13},2\iota_{19}\}_1$
 by \cite[III-Proposition 2.3 (5); the second]{Od2} and
 $\varepsilon_n\sigma_{n+8}=0$ for $n \ge 3$~\cite[Lemma
 10.7]{T}, Lemma~\ref{21} induces 
\begin{align*}
\bar\varepsilon_5p'_{20}=\{\varepsilon_5, \nu^2_{13}, 2\iota_{19}\}_1
 \circ p'_{20} 
=\varepsilon_5 \circ E\{\nu^2_{12},2\iota_{18},p'_{18}\}
\ni \varepsilon_5\overline{\nu^2}_{13} \\
\bmod \quad \varepsilon_5 \circ E\pi^{12}_{19} \circ p'_{20} 
=\{\varepsilon_5\sigma_{13}\}p'_{20}=0. 
\end{align*}
\end{proof}
Recall the relation $\nu_5\zeta_8=\sigma'''\sigma_{12}$ \cite[Lemma 2.3]{KM2}. 
Let $\overline{\sigma'''\sigma_{12}}$ be an extension of $\sigma'''\sigma_{12}$.
\begin{lem}\label{M22}
$[M^{22},S^7]=\{E^2\overline{\sigma'''\sigma_{12}},\bar\nu_7\overline{\nu^2}_{15}, \rho''p'_{22},\sigma'\bar\nu_{14}p'_{22},
\sigma'\varepsilon_{14}p'_{22},\bar\varepsilon_{7}p'_{22}\}
\cong(\Z_2)^6$.
\end{lem}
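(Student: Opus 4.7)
The plan is to apply $[-,\,S^7]$ to the cofiber sequence (\ref{cohomotopy}) at level $n+1=22$. This produces the four-term exact sequence
\begin{equation*}
\pi^7_{22}\xrightarrow{\cdot 2}\pi^7_{22}\xrightarrow{-\circ p'_{22}}[M^{22},S^7]\xrightarrow{-\circ i'_{22}}\pi^7_{21}\xrightarrow{\cdot 2}\pi^7_{21},
\end{equation*}
equivalently, the short exact sequence
\begin{equation*}
0\to \pi^7_{22}/2\pi^7_{22}\xrightarrow{-\circ p'_{22}}[M^{22},S^7]\xrightarrow{-\circ i'_{22}}{}_2\pi^7_{21}\to 0,
\end{equation*}
where ${}_2G$ denotes the $2$-torsion subgroup of an abelian group $G$. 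I would then attack the cokernel, the kernel, and the extension separately.

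For the image of $-\circ p'_{22}$, I would read off from Toda's table \cite{T} (and its continuations such as \cite{MT1}) that $\pi^7_{22}/2\pi^7_{22}$ is $(\Z_2)^4$ spanned by the residues of $\rho''$, $\sigma'\bar\nu_{14}$, $\sigma'\varepsilon_{14}$, and $\bar\varepsilon_7$; this yields the four generators $\rho''p'_{22}$, $\sigma'\bar\nu_{14}p'_{22}$, $\sigma'\varepsilon_{14}p'_{22}$, $\bar\varepsilon_7p'_{22}$. For $-\circ i'_{22}$, I would verify that ${}_2\pi^7_{21}\cong(\Z_2)^2$ with basis $\{2\sigma'\sigma_{14},\ \bar\nu_7\nu^2_{15}\}$. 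Using $\nu_5\zeta_8=\sigma'''\sigma_{12}$ together with $E\sigma''=2\sigma'$ \cite[Lemma 5.14]{T}, one then computes
\[
(E^2\overline{\sigma'''\sigma_{12}})\circ i'_{22}=E^2(\sigma'''\sigma_{12})=2\sigma'\sigma_{14},\qquad (\bar\nu_7\overline{\nu^2}_{15})\circ i'_{22}=\bar\nu_7\nu^2_{15},
\]
so $E^2\overline{\sigma'''\sigma_{12}}$ and $\bar\nu_7\overline{\nu^2}_{15}$ lift a basis of ${}_2\pi^7_{21}$.

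It remains to check that the sequence splits, i.e., that each lift has order $2$. By (\ref{ietap}), $2\iota_{M^{22}}=i'_{22}\eta_{21}p'_{22}$, so any $f\in[M^{22},S^7]$ satisfies $2f=(f\circ i'_{22})\eta_{21}p'_{22}$. Thus
\[
2(E^2\overline{\sigma'''\sigma_{12}})=2\sigma'\sigma_{14}\eta_{21}p'_{22},\qquad 2(\bar\nu_7\overline{\nu^2}_{15})=\bar\nu_7\nu^2_{15}\eta_{21}p'_{22}.
\]
The first vanishes because it carries an explicit factor of $2$ and $2\pi^7_{22}\subset\ker(-\circ p'_{22})$ by exactness; the second vanishes because $\nu_n\eta_{n+3}=0$ for $n\ge 5$ \cite[(5.9)]{T} forces $\nu^2_{15}\eta_{21}=0$. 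Combining the three steps yields $[M^{22},S^7]\cong(\Z_2)^4\oplus(\Z_2)^2\cong(\Z_2)^6$ with the six listed generators. The main obstacle is the bookkeeping needed to pin down $\pi^7_{22}:2$ and ${}_2\pi^7_{21}$ precisely: in particular, confirming that $\{2\sigma'\sigma_{14},\bar\nu_7\nu^2_{15}\}$ is a \emph{full} basis for ${}_2\pi^7_{21}$ and that $\{\rho'',\sigma'\bar\nu_{14},\sigma'\varepsilon_{14},\bar\varepsilon_7\}$ really spans $\pi^7_{22}/2\pi^7_{22}$, which goes beyond the explicit tables of \cite{T}.
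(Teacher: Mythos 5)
Your proof follows the same route as the paper's: both apply $[-,S^7]$ to the cofiber sequence (\ref{cohomotopy}), identify the image of $-\circ p'_{22}$ with $\pi^7_{22}/2\pi^7_{22}\cong(\Z_2)^4$ spanned by $\rho''$, $\sigma'\bar\nu_{14}$, $\sigma'\varepsilon_{14}$, $\bar\varepsilon_7$, lift the $2$-torsion of $\pi^7_{21}$ by $E^2\overline{\sigma'''\sigma_{12}}$ and $\bar\nu_7\overline{\nu^2}_{15}$, and finish by checking these lifts have order $2$ (the paper via $(\sigma'''\eta_{12})\sigma_{13}p'_{20}=0$ and Lemma~\ref{21}(1), you via $2\iota_{M^{22}}=i'_{22}\eta_{21}p'_{22}$ and exactness, which is equally valid). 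The one correction needed is a factor of $2$: since $E\sigma'''=2\sigma''$ and $E\sigma''=2\sigma'$, one has $E^2(\sigma'''\sigma_{12})=4\sigma'\sigma_{14}$, not $2\sigma'\sigma_{14}$ (which has order $4$ and is not $2$-torsion), so the basis of the $2$-torsion subgroup of $\pi^7_{21}\cong\Z_8\{\sigma'\sigma_{14}\}\oplus\Z_4\{\kappa_7\}$ is $\{4\sigma'\sigma_{14},\,2\kappa_7=\bar\nu_7\nu^2_{15}\}$; with that adjustment your argument is exactly the paper's.
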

\begin{proof}
Recall that $\pi^7_{21}=\{\sigma'\sigma_{14}, \kappa_7\} \cong \Z_8
 \oplus \Z_4$~\cite[Theorem 10.3]{T} and $\pi^7_{22}=\{\rho'', \sigma'\bar{\nu}_{14}, \sigma'\varepsilon_{14}, \bar{\varepsilon}_7\} \cong \Z_8 \oplus
 (\Z_2)^3$~\cite[Theorem 10.5]{T}, where
 $4(\sigma'\sigma_{14})=E^2(\sigma'''\sigma_{12})$ by \cite[Lemma 5.14]{T} and $2\kappa_7=\bar{\nu}_7\nu^2_{15}$. 
So, by using (\ref{cohomotopy}) for $n=21$, we have 
\[
 [M^{22}, S^7]=\{E^2\overline{\sigma'''\sigma_{12}}, \bar{\nu}_7\overline{\nu^2}_{15}\}+ \pi^7_{22} \circ p'_{22}.
\]
We know from \cite[(7.1), (7.4)]{T} : 
\begin{equation}\label{j5}
\eta_9\sigma_{10}+\sigma_9\eta_{16}=P(\iota_{19}) \quad \text{and} \quad 
 \sigma_{10}\eta_{17}=\eta_{10}\sigma_{11} . 
\end{equation}
By (\ref{ietap}), (\ref{j5}) and \cite[(7,4)]{T}, we obtain 
$\;2\overline{\sigma'''\sigma_{12}}
=\sigma'''\sigma_{12} \circ
\eta_{19}p'_{20}=(\sigma'''\eta_{12})\sigma_{13}p'_{20}$ $=0$. 
By Lemma~\ref{21} (1),  
we have $2(\bar{\nu}_7 \overline{\nu^2}_{15})=0$.
This completes the proof.
\end{proof}

\section{Toda brackets for extensions over $M^{24}$ of $P(E\theta)$ and $\nu_6\kappa_9$}
First of all, we recall ~\cite[(7.21) ; the third]{T}
\begin{equation}\label{rn}
\sigma_{11}\nu_{18}=P(\iota_{23}), \quad \text{so that} \quad
 \sigma_{n}\nu_{n+7}=0 \quad \text{for} \quad n \ge 12. 
\end{equation}
Hence, we have the well-defined element $\theta \in \{\sigma_{12}, \nu_{19},
\eta_{22}\}_1$~ \cite[pp.73--74]{T}, whose indeterminacy 
is $\sigma_{12} \circ E\pi^{18}_{23} + \pi^{12}_{23} \circ \eta_{23} =\{P(\eta_{25}), \zeta_{12}\eta_{23}\}$. 

We know $\zeta_6\eta_{17}=8P(\sigma_{13})$ and $\zeta_{n}\eta_{n+2}=0$
for $n \ge 7$ ~\cite[Proposition 2.2 (6)]{Og}.
We also know $P(\eta_{25})=E\theta'$~\cite[(7.30) ; the second]{T}. 
Therefore, $\{\sigma_{12}, \nu_{19}, \eta_{22}\}_1 = \theta + \{E\theta'\}$.

Since $\sigma_{12} \circ \pi^{19}_{24}=0$ and $\sigma_{12} \circ E^{13}\pi^6_{11}=0$, we have
 $\{\sigma_{12},\nu_{19},\eta_{22}\}_1=\{\sigma_{12},\nu_{19},\eta_{22}\}_n$
 $(0 \le n \le 13)$.
So, we take $\{\sigma_{12},\nu_{19},\eta_{22}\}_3 = \theta +
\{E\theta'\}$, and $E\theta=\{\sigma_{13},\nu_{20},\eta_{23}\}_4$ from
the fact $\pi^{13}_{24} \circ \eta_{24}=\{\zeta_{13}\eta_{24}\}=0$.

We recall the following equation from \cite[Proposition 2.2 (2)]{Og}  
\begin{equation}\label{j6}
\mu_n\eta_{n+9}=\eta_n\mu_{n+1} \quad \text{for} \quad n \ge 3.
\end{equation}
We have that $P(E\theta) \in \{P(\sigma_{13}),\nu_{18},\eta_{21}\}_2
 \, \bmod \, \pi^6_{22} \circ \eta_{22}$. 
Note that $\pi^6_{22}=\{\zeta_6',
\mu_6\sigma_{15}, \eta_6\bar\varepsilon_7 \} \cong \Z_8 \oplus
(\Z_2)^2$~\cite[Theorem 12.6]{T}. 
We know $\zeta'\eta_{22}=0$~\cite[Proposition 2.13 (5)]{Og} and  
$\eta_6\bar{\varepsilon}_7\eta_{22}=(\nu_6\sigma_9\nu^2_{16})\circ
\eta_{22}=0$ by \cite[Lemma 12.10]{T}. 
By (\ref{j5}) and (\ref{j6}), we have the relation
$\mu_5\sigma_{14}\eta_{21} =\mu_5\eta_{14}\sigma_{15} =
\eta_5\mu_6\sigma_{15}$.  
Hence, we obtain $\pi^6_{22} \circ \eta_{22}
 =\{\eta_6\mu_7\sigma_{16}\}$ and $P(E\theta) \in
 \{P(\sigma_{13}),\nu_{18},\eta_{21}\}_2 \bmod \eta_6\mu_7\sigma_{16}$. 

We consider the Toda bracket
 $\{P(\sigma_{13}),\nu_{18},\bar{\eta}_{21}\}_2 \subset [M^{24}, S^6]$. 
By the relation (\ref{rn}), 
we have $P(\sigma_{13}) \circ \nu_{18}=P(\sigma_{13}\nu_{21})=0$. So, by
 (\ref{M201}), we obtain 
\begin{equation}\label{13}
P(\sigma_{13}) \circ E^{11}[M^{13}, S^7]
=P(\sigma_{13}) \circ [M^{24}, S^{18}]
=0 .
\end{equation}
By (\ref{j3}) and (\ref{13}), we have $\{P(\sigma_{13}),\nu_{18},\bar{\eta}_{21}\}=\{P(\sigma_{13}),\nu_{18},\bar{\eta}_{21}\}_n$
 for $(0 \le n \le 11)$. 

Let $\overline{P(E\theta)} \in [M^{24}, S^6]$ be an extension of
 $P(E\theta)$. Then, we show : 
\begin{lem}\label{san}
$\overline{P(E\theta)} \in \{P(\sigma_{13}),\nu_{18},\bar{\eta}_{21}\}_2 \bmod \pi^6_{22} \circ \bar\eta_{22} + \pi^6_{24} \circ p'_{24}$
\end{lem}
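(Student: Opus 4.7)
The plan is to use the already-established inclusion $P(E\theta)\in\{P(\sigma_{13}),\nu_{18},\eta_{21}\}_2 \bmod \eta_6\mu_7\sigma_{16}$ together with the identity $\bar\eta_{21}\circ i'_{23}=\eta_{21}$ to lift $P(E\theta)$ to a Toda bracket over $M^{24}$, and then identify the resulting extension with $\overline{P(E\theta)}$ via the standard ambiguity of extensions.

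First, the bracket $\{P(\sigma_{13}),\nu_{18},\bar\eta_{21}\}_2\subset[M^{24},S^6]$ is well-defined: $P(\sigma_{13})\nu_{18}=0$ by (\ref{rn}) and $\nu_{18}\bar\eta_{21}=0$ by (\ref{j3}). Its left-indeterminacy $P(\sigma_{13})\circ[M^{24},S^{18}]$ vanishes by (\ref{13}), so the total indeterminacy reduces to $\pi^6_{22}\circ\bar\eta_{22}$ (using $E\bar\eta_{21}=\bar\eta_{22}$).

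Next, for any $\alpha\in\{P(\sigma_{13}),\nu_{18},\bar\eta_{21}\}_2$, applying the composition rule for Toda brackets to the factorization $\eta_{21}=\bar\eta_{21}\circ i'_{23}$, together with $Ei'_{23}=i'_{24}$, yields $\alpha\circ i'_{24}\in\{P(\sigma_{13}),\nu_{18},\eta_{21}\}_2$. Hence $\alpha\circ i'_{24}-P(E\theta)$ lies in $\pi^6_{22}\circ\eta_{22}=\{\eta_6\mu_7\sigma_{16}\}$. Pick $y\in\pi^6_{22}$ with $y\circ\eta_{22}=\alpha\circ i'_{24}-P(E\theta)$, which is possible by the explicit identification of $\pi^6_{22}\circ\eta_{22}$ above (with $y=0$ or $y=\mu_6\sigma_{15}$, the latter using $\mu_6\sigma_{15}\eta_{22}=\eta_6\mu_7\sigma_{16}$ via (\ref{j5}) and (\ref{j6})). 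Then $\gamma:=\alpha-y\circ\bar\eta_{22}$ remains in the bracket (since $y\circ\bar\eta_{22}$ lies in the right-indeterminacy) and satisfies $\gamma\circ i'_{24}=P(E\theta)$, so $\gamma$ is a genuine extension of $P(E\theta)$ over $M^{24}$.

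Finally, the cofiber sequence (\ref{cohomotopy}) applied at $M^{24}$ shows that any two extensions of a given map from $S^{23}$ differ by an element of $\pi^6_{24}\circ p'_{24}$, so $\overline{P(E\theta)}-\gamma\in\pi^6_{24}\circ p'_{24}$, which gives the claim. The delicate point I expect is justifying the composition rule at the filtered level $\{\cdot\}_2$ rather than only at the unfiltered level; this should follow from carefully tracking the $n=2$ null-homotopy data through $\bar\eta_{21}\circ i'_{23}=\eta_{21}$, in keeping with the fact observed earlier that $\{P(\sigma_{13}),\nu_{18},\bar\eta_{21}\}_n$ is independent of $n$ in the relevant range.
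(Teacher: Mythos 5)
Your argument is correct, but it follows a genuinely different route from the paper's. The paper's proof regards $\overline{P(E\theta)}$ as a representative of $\{P(E\theta),2\iota_{23},p'_{23}\}_2$ and applies the Jacobi identity \cite[Proposition 1.5]{T} to the triple built from $P(\sigma_{13}),\nu_{18},\eta_{21},2\iota_{22},p'_{22}$: the middle term $\{P(\sigma_{13}),\{\nu_{18},\eta_{21},2\iota_{22}\},p'_{23}\}$ collapses into $\pi^6_{24}\circ p'_{24}$ because $\{\nu_{18},\eta_{21},2\iota_{22}\}\subset\pi^{18}_{23}=0$ and (\ref{13}) holds, the last term becomes $\{P(\sigma_{13}),\nu_{18},x\bar\eta_{21}\}$ with $x$ odd by (\ref{eet}), and the indeterminacies are controlled by (\ref{13}) and (\ref{M20}). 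You instead push the bracket $\{P(\sigma_{13}),\nu_{18},\bar\eta_{21}\}$ down to the sphere via the composition rule $\{\alpha,\beta,\gamma\}\circ E\delta\subset\{\alpha,\beta,\gamma\circ\delta\}$ applied to $\eta_{21}=\bar\eta_{21}\circ i'_{23}$, correct by an element of the right indeterminacy $\pi^6_{22}\circ\bar\eta_{22}$ to manufacture an honest extension of $P(E\theta)$ lying in the bracket, and then use exactness of (\ref{cohomotopy}) to compare it with $\overline{P(E\theta)}$ modulo $\pi^6_{24}\circ p'_{24}$. Both arguments rest on the same inputs ((\ref{rn}), (\ref{j3}), (\ref{13}), and $\pi^6_{22}\circ\eta_{22}=\{\eta_6\mu_7\sigma_{16}\}$ via (\ref{j5}) and (\ref{j6})); yours is more elementary in that it avoids the Jacobi identity, at the price of the explicit correction step, while the paper's version packages the extension/coextension bookkeeping into one identity. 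The filtration worry you flag at the end is harmless here: since $P(\sigma_{13})\circ[M^{24},S^{18}]=0$ and $\pi^{18}_{23}=0$, the brackets $\{P(\sigma_{13}),\nu_{18},\bar\eta_{21}\}_n$ and $\{P(\sigma_{13}),\nu_{18},\eta_{21}\}_n$ are independent of $n$ in the relevant range and each is a full coset of its (common) indeterminacy, so the unfiltered composition rule already suffices.
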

\begin{proof}
Notice that $\overline{P(E\theta)}$ is a representative of the Toda
 bracket $\{P(E\theta), 2\iota_{23}, p'_{23}\}_2$. 
We use the Jacobi identity \cite[Proposition 1.5]{T} :  
\begin{align*}
\{ \{P(\sigma_{13}),\nu_{18},{\eta}_{21}\}, 2\iota_{23}, p'_{23}\} +
 \{P(\sigma_{13}), \{\nu_{18}, \eta_{21}, 2\iota_{22}\}, p'_{23}\} \\ +
 \{P(\sigma_{13}),\nu_{18}, \{\eta_{22}, 2\iota_{22},
 p'_{22}\}\}\equiv 0.  
\end{align*}
By (\ref{eet}), we rewrite the identity:
\begin{align*}
\{ \{P(\sigma_{13}),\nu_{18},{\eta}_{21}\}, 2\iota_{23}, p'_{23}\} +
 \{P(\sigma_{13}), \{\nu_{18}, \eta_{21}, 2\iota_{22}\}, p'_{23}\} \\ +
 \{P(\sigma_{13}),\nu_{18}, x\bar{\eta}_{21}\} \equiv 0 \quad \text{for} \quad x :
 \text{odd}.  
\end{align*}

From the fact that $\{\nu_{18}, \eta_{21}, 2\iota_{22}\} \subset
 \pi^{18}_{23}=0$ and (\ref{13}),
 the second term is 
$\{P(\sigma_{13}), 0, p'_{23}\}=P(\sigma_{13})\circ [M^{24},
 S^{18}]+\pi^6_{24}\circ p'_{24}=\pi^6_{24}\circ p'_{24}$. 
The indeterminacy of the first term is 
\begin{align*}
\{P(\sigma_{13}),\nu_{18},{\eta}_{21}\} \circ [M^{24}, S^{23}] +
 \pi^6_{24} \circ p'_{24}.
\end{align*}
By (\ref{M20}), we have $\{P(\sigma_{13}), \nu_{18}, \eta_{21}\} \circ
 [M^{24}, S^{23}] \subset \{P(\sigma_{13}), \nu_{18}, \eta_{21}\} \circ
 \{\eta_{23}p'_{24}\}$ $\subset$ $\pi^6_{24} \circ p'_{24}$. 
So, the indeterminacy is $\pi_{24}^6 \circ p'_{24}$

The indeterminacy of the third term is
\begin{align*}
P(\sigma_{13}) \circ [M^{24}, S^{18}] + \pi_{22}^6 \circ \bar\eta_{22} =
 \pi_{22}^6 \circ \bar\eta_{22}
\end{align*}
 by (\ref{13}). 
Thus the assertion is proved. 
\end{proof}
Recall that $\eta_7\sigma_8 =(\sigma'\eta_{14}+\bar\nu_7
+\varepsilon_7)$ by \cite[(7.4)]{T}. 
We have the relation 
\[
 (\ast) \quad \eta_7\sigma_8\nu^2_{15}= (\bar{\nu}_7+\varepsilon_7
 +\sigma'\eta^2_{14})\nu^2_{15}=\bar{\nu}_7\nu^2_{15}=2\kappa_7 .
\] 
Let us take an extension $\overline{\nu_6\kappa_9} \in \{\nu_6\kappa_9,
2\iota_{23}, p'_{23}\}_2$ over $M^{24}$ of $\nu_6\kappa_9$. 

By the relations
$\varepsilon_3\nu_{11}=\nu'\bar\nu_6=0$~\cite[(7.12)]{T}, $E^2\nu'=2\nu_5$ and $(\ast)$, we have 
\begin{align*}
\eta_7\sigma_{8}\nu^2_{15}p'_{21}
=2\kappa_7p'_{21}=\kappa_7\circ 2p'_{21}=0.
\end{align*}
Therefore the Toda bracket 
$\{\nu_6,\eta_9,\sigma_{10}\nu^2_{17}p'_{23}\}_2$ 
is well-defined.

We have $\overline{\nu_6\kappa_9} \in \{\nu_6\kappa_9,
2\iota_{23}, p'_{23}\}_2 \subset \{\nu_6, 2\kappa_9, p'_{23}\}_2 \supset
\{\nu_6, \eta_{9}, \sigma_{10}\nu^2_{17}p'_{23}\}_2$ since
$2\kappa_9=\bar\nu_9\nu^2_{17}=\eta_9\sigma_{10}\nu^2_{17}$. The
indeterminacy of the second bracket is $\nu_6 \circ E^2[M^{22},
S^7] + \pi^{6}_{24} \circ p'_{24}$. This implies 
\begin{lem}\label{322}
$\overline{\nu_6\kappa_9} \in
	   \{\nu_6,\eta_9,\sigma_{10}\nu^2_{17}p'_{23}\}_2 \bmod \nu_6
	   \circ E^2[M^{22}, S^7] + \pi^6_{24} \circ p'_{24}$.
\end{lem}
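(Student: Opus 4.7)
The plan is to locate an extension $\overline{\nu_6\kappa_9}\in[M^{24},S^6]$ of $\nu_6\kappa_9$ inside a chain of secondary Toda brackets and then to read off the indeterminacy of the final bracket. By the definition of an extension over the Moore space $M^{24}=S^{23}\cup_{2\iota_{23}}e^{24}$, the class $\overline{\nu_6\kappa_9}$ is a representative of the cofibration bracket $\{\nu_6\kappa_9,2\iota_{23},p'_{23}\}_2$; this is the starting point of the chase.

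For the first step, I would write $\nu_6\kappa_9=\nu_6\circ\kappa_9$ and apply the standard ``shift-into-second-slot'' juggling identity for secondary Toda brackets (a consequence of \cite[Proposition 1.5]{T}) to get
\[
\{\nu_6\kappa_9,\,2\iota_{23},\,p'_{23}\}_2 \subset \{\nu_6,\,\kappa_9\circ 2\iota_{23},\,p'_{23}\}_2 = \{\nu_6,\,2\kappa_9,\,p'_{23}\}_2.
\]
For the second step, I would invoke the identity $2\kappa_9=\bar{\nu}_9\nu^2_{17}=\eta_9\sigma_{10}\nu^2_{17}$ (from the renaming convention for $\kappa$ adopted after \cite{MT1}) and the dual ``shift-into-third-slot'' juggling in the middle slot to conclude
\[
\{\nu_6,\,2\kappa_9,\,p'_{23}\}_2 \supset \{\nu_6,\,\eta_9,\,\sigma_{10}\nu^2_{17}p'_{23}\}_2.
\]
The target bracket is well defined thanks to $\nu_6\eta_9=0$ from \cite[(5.9)]{T} and the identity $\eta_7\sigma_8\nu^2_{15}p'_{21}=2\kappa_7p'_{21}=0$ verified in the paragraph immediately preceding the lemma.

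Chaining the two containments locates $\overline{\nu_6\kappa_9}$ inside $\{\nu_6,\eta_9,\sigma_{10}\nu^2_{17}p'_{23}\}_2$ modulo the ambiguity produced by the juggles. The standard indeterminacy formula for a secondary Toda bracket $\{\alpha,\beta,\gamma\}_2$ produces two contributions: one of the form $\alpha\circ(\text{cohomotopy from }\mathrm{dom}\,\gamma)$ and one of the form $(\text{homotopy from }\mathrm{cod}\,\alpha)\circ\gamma$. Specialising to $\alpha=\nu_6$, $\beta=\eta_9$, and $\gamma=\sigma_{10}\nu^2_{17}p'_{23}\in[M^{24},S^{10}]$, these become exactly $\nu_6\circ E^2[M^{22},S^7]$ and $\pi^6_{24}\circ p'_{24}$, matching the statement. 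The one delicate point is verifying that the two juggles are oriented correctly ($\subset$ then $\supset$) and that the two summands of the indeterminacy come out precisely as claimed; since the vanishings needed to justify each step have already been gathered, the argument should go through cleanly.
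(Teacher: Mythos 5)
Your argument is exactly the paper's: the chain $\{\nu_6\kappa_9,2\iota_{23},p'_{23}\}_2\subset\{\nu_6,2\kappa_9,p'_{23}\}_2\supset\{\nu_6,\eta_9,\sigma_{10}\nu^2_{17}p'_{23}\}_2$ via $2\kappa_9=\bar\nu_9\nu^2_{17}=\eta_9\sigma_{10}\nu^2_{17}$, followed by reading off the indeterminacy. The only cosmetic slip is that the mod term in the lemma is the indeterminacy of the \emph{middle} bracket $\{\nu_6,2\kappa_9,p'_{23}\}_2$ (whose third entry $p'_{23}$ is what produces $\pi^6_{24}\circ p'_{24}$), which is the subgroup controlling the difference of two elements both contained in that coset, rather than the indeterminacy of the final bracket as you state; this does not affect the conclusion.
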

By \cite[Lemma 12.10]{T}, we have 
\begin{equation}\label{j7}
\bar\varepsilon_n\eta_{n+15}=\eta_n\bar\varepsilon_{n+1} \quad
 \text{for} \quad n \ge 3.
\end{equation}
We know from \cite[(7.5)]{T}
\begin{equation}\label{j8}
\varepsilon_n\eta_{n+8}=\eta_n\varepsilon_{n+1} \quad
 \text{for} \quad n \ge 3.
\end{equation}
We recall the equations from \cite[Lemma 6.3]{T} that 
\begin{equation}\label{hs}
\eta_n\bar\nu_{n+1} = \bar\nu_n\eta_{n+8} = \nu^3_n \quad
 \text{for} \quad n \ge 6.
\end{equation}
Next, we show :
\begin{lem}\label{lA}
$\{2\iota_7, \eta_7, \sigma_8\nu^2_{15}p'_{21}\} =
 \bar{\nu_7}\overline{\nu^2}_{15}
$
\end{lem}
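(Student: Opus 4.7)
My plan is to prove the equality in two stages. First, I verify that the indeterminacy of the Toda bracket vanishes, so the bracket is a singleton in $[M^{22},S^7]$. Second, I identify $\bar\nu_7\overline{\nu^2}_{15}$ as a representative by applying Toda's one-sided associativity formulas to the extension bracket for $\overline{\nu^2}_{15}$.

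For the indeterminacy, the general formula gives $2\iota_7\circ[M^{22},S^7]+\pi^7_9\circ E(\sigma_8\nu^2_{15}p'_{21})$, where $\pi^7_9=\{\eta^2_7\}$. The first summand vanishes by Lemma~\ref{M22}, which gives $[M^{22},S^7]\cong(\Z_2)^6$. For the second, $E(\sigma_8\nu^2_{15}p'_{21})=\sigma_9\nu^2_{16}p'_{22}$, and $\eta^2_7\sigma_9$ can be computed using $\eta_7\sigma_8=\sigma'\eta_{14}+\bar\nu_7+\varepsilon_7$ from \cite[(7.4)]{T}, together with $E\sigma'=2\sigma_8$ and $\sigma_n\eta_{n+7}=\eta_n\sigma_{n+1}$, giving $\eta^2_7\sigma_9=\nu^3_7+\varepsilon_7\eta_{15}$. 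Multiplying by $\nu^2_{16}p'_{22}$ and invoking $\eta_{15}\nu_{16}=0$ and $\nu^5_7=0$ leaves $0$.

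For the identification, the key identity $(\ast)$ gives $\eta_7\sigma_8\nu^2_{15}=\bar\nu_7\nu^2_{15}=2\kappa_7$. Since $\overline{\nu^2}_{15}\in\{\nu^2_{15},2\iota_{21},p'_{21}\}$ by construction as an extension, Toda's left slide $\alpha\circ\{f,g,h\}\subset\{\alpha f,g,h\}$ yields
\[
\bar\nu_7\overline{\nu^2}_{15}\in\bar\nu_7\circ\{\nu^2_{15},2\iota_{21},p'_{21}\}\subset\{2\kappa_7,2\iota_{21},p'_{21}\}\ (\text{mod indet}).
\]
A parallel juggling, writing $2\kappa_7=\eta_7\sigma_8\nu^2_{15}$ and sliding the composed factors to the third position, gives $\{2\kappa_7,2\iota_{21},p'_{21}\}\subset\{2\iota_7,\eta_7,\sigma_8\nu^2_{15}p'_{21}\}$ modulo indeterminacy. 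Since both indeterminacies vanish by the first step, the modulo-indeterminacy inclusions become equalities.

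The main obstacle is the second slide: moving three factors between positions is not covered by one application of the standard associativity formula, and intermediate brackets such as $\{2\iota_7,\eta_7,\sigma_8\nu^2_{15}\}$ are not defined, since $\eta_7\sigma_8\nu^2_{15}=2\kappa_7\neq 0$. I would circumvent this by realizing the Toda bracket element directly through compatible null-homotopies: the extension $\overline{\nu^2}_{15}$ provides a null-homotopy of $2\nu^2_{15}=0$ on $M^{22}$, and composing with $\bar\nu_7$ yields a null-homotopy of $\bar\nu_7\nu^2_{15}p'_{21}=\eta_7\sigma_8\nu^2_{15}p'_{21}$ that realizes $\bar\nu_7\overline{\nu^2}_{15}$ as the Toda bracket element.
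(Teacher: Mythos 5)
Your first step is correct and is actually a point the paper does not make: the triple bracket $\{2\iota_7,\eta_7,\sigma_8\nu^2_{15}p'_{21}\}$ itself has vanishing indeterminacy, since $2\iota_7\circ[M^{22},S^7]=2[M^{22},S^7]=0$ by Lemma~\ref{M22} and $\eta_7^2\sigma_9\nu^2_{16}=(\nu^3_7+\varepsilon_7\eta_{15})\nu^2_{16}=0$. But that is not the indeterminacy that matters, and the second step has a genuine gap. The only link you establish between $\bar\nu_7\overline{\nu^2}_{15}$ and the bracket passes through the intermediate bracket $\{2\iota_7,2\kappa_7,p'_{21}\}\supset\{\bar\nu_7\nu^2_{15},2\iota_{21},p'_{21}\}$, and a chain of inclusions into a common larger bracket only shows the two elements agree modulo the indeterminacy of that \emph{larger} bracket, which contains $\pi^7_{22}\circ p'_{22}=\{\rho'',\sigma'\bar\nu_{14},\sigma'\varepsilon_{14},\bar\varepsilon_7\}\circ p'_{22}\cong(\Z_2)^4$. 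This does not vanish, so "both indeterminacies vanish, hence the inclusions become equalities" is false as stated: the difference is an undetermined element $a\rho''p'_{22}+b\sigma'\bar\nu_{14}p'_{22}+c\sigma'\varepsilon_{14}p'_{22}+d\bar\varepsilon_7p'_{22}$, and eliminating it is the entire content of the lemma.

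Your proposed circumvention by "compatible null-homotopies" does not close this gap. The bracket $\{2\iota_7,\eta_7,\sigma_8\nu^2_{15}p'_{21}\}$ is built from null-homotopies adapted to the factorization $2\kappa_7=\eta_7\circ(\sigma_8\nu^2_{15})$, whereas the extension $\overline{\nu^2}_{15}$ produces one adapted to $2\kappa_7=\bar\nu_7\circ\nu^2_{15}$ (note also that $\nu^2_{15}p'_{21}\neq0$ in $[M^{21},S^{15}]$ by (\ref{M201}), so $\overline{\nu^2}_{15}$ is not a null-homotopy of $\nu^2_{15}p'_{21}$). Comparing the elements obtained from these two different factorizations is precisely the $(\Z_2)^4$ ambiguity above; asserting they coincide is assuming what must be proved. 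The paper resolves this by composing the identity $(\ast\ast)$ with the coextension $\tilde\eta_{21}$ on the right, which forces $a=c=0$ and $b=d$ using $\{2\iota_7,\eta_7,\sigma_8\nu^2_{15}p'_{21}\}\circ\tilde\eta_{21}\subset2\pi^7_{23}=0$ and $\bar\nu_7\overline{\nu^2}_{15}\circ\tilde\eta_{21}=0$, and then with $\eta_6$ on the left, which forces $d=0$ because $\eta_6\bar\varepsilon_7p'_{22}\neq0$. Some argument of this kind is unavoidable and is entirely missing from your proposal.
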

\begin{proof}
By $(\ast)$, we have $\{2\iota_7, \eta_7, \sigma_8\nu^2_{15}p'_{21}\}$
 $\subset$ $\{2\iota_7, 2\kappa_7, p'_{21}\}\supset
 \{\bar{\nu}_7\nu^2_{15}, 2\iota_{21}, p'_{21}\} \ni
 \bar\nu_7\overline{\nu^2}_{15} \bmod 2\iota_7 \circ [M^{22}, S^7]+\pi^7_{22}
 \circ p'_{22}$. By \cite[Theorem 10.5]{T}, $\pi^7_{22}
 \circ p'_{22} = \{\rho'',
 \sigma'\bar{\nu}_{14}, \sigma'\varepsilon_{14}, \bar\varepsilon_7\}
 \circ p'_{22}$. 
By the fact that $S^{7}$ is an $H$-space and by Lemma~\ref{M22}, we have $2\iota_7 \circ [M^{22}, S^7]=2[M^{22},
 S^7]=0$. Hence, we obtain  
\[
 \{2\iota_7, \eta_7, \sigma_8\nu^2_{15}p'_{21}\} \ni
 \bar{\nu}_7\overline{\nu^2}_{15} \bmod \{\rho'', \sigma'\bar{\nu}_{14},
 \sigma'\varepsilon_{14}, \bar\varepsilon_7\} \circ p'_{22}.
\]
We set 
\[
(\ast \ast) \quad \{2\iota_7, \eta_7,
 \sigma_8\nu^2_{15} p'_{21}\} = \bar{\nu}_7\overline{\nu^2}_{15} + (a\rho'' +b \sigma'\bar{\nu}_{14} +c
 \sigma'\varepsilon_{14},+ d\bar\varepsilon_7) \circ p'_{22}, 
\]
where $a, b, c, d \in \{0, 1\}$. 
We compose $\tilde{\eta}_{21}$ to $(\ast \ast)$ on the right. We know
 the relations $\rho''\eta_{22}=\sigma'\mu_{14}$ \cite[Proposition
 2.8 (3) ; the second]{Og},
 $\sigma'\bar{\nu}_7\eta_{15}=\sigma'\nu^3_{14}= \nu_7\sigma_{10}\nu^2_{17}=\eta_7\bar{\varepsilon}_8=\bar{\varepsilon}_7\eta_{22}$
 by (\ref{hs}), (\ref{j7}) and \cite[(7.19), Lemma 12.11]{T}. Moreover,
 we know $\sigma'\varepsilon_{14}\eta_{22}=\sigma'\eta_{14}\varepsilon_{15}=E\zeta'$
 by \cite[(12.4)]{T} and (\ref{j8}). 
This implies
$(a\rho'' +b \sigma'\bar{\nu}_{14} +c
 \sigma'\varepsilon_{14},+ d\bar\varepsilon_7) \circ p'_{22} \circ
 \tilde{\eta}_{21}
 =a\sigma'\mu_{14}+(b+d)\eta_7\bar{\varepsilon}_8+cE\zeta'$. 

On the other hand, we have
$\{2\iota_7, \eta_7,
 \sigma_8\nu^2_{15} p'_{21}\} \circ \tilde{\eta}_{21}
=2\iota_7 \circ \{\eta_7,\sigma_8\nu^2_{15}p'_{21},\tilde{\eta}_{20}\}
\subset 2\iota_7 \circ \pi^7_{23}=2\pi^7_{23}=0$~\cite[Theorem 12.6]{T}.

We know that $\varepsilon_5 \in \{\nu_5^2, 2\iota_{11}, \eta_{11} \}$~\cite[(7.6)]{T}. 
So, by the relation
 $\bar{\nu}_6\varepsilon_{14}=0$~\cite[Proposition 2.8 (2)]{Og} and $\bar{\nu}_6\sigma_{14}=0$~\cite[Lemma 10.7]{T},  
we have 
\[
 \bar\nu_7\overline{\nu^2}_{15} \circ \tilde{\eta}_{21}
\in \bar\nu_7 \circ \{\nu^2_{15},2\iota_{21},\eta_{21}\}
= \bar\nu_7(\varepsilon_{15} + \pi^{15}_{22} \circ \eta_{22})
= \bar\nu_7\varepsilon_{15} + \bar\nu_7 \circ \{\sigma_{15}\eta_{22}\} =0.
\]
So, $(\ast \ast)$ becomes 
$0=a\sigma'\mu_{14}+(b+d)\eta_7\bar{\varepsilon}_8+cE\zeta'$ and hence, 
we have $a=c=(b+d)=0$ by seeing $\pi^7_{23}=\{\sigma'\mu_{14}, E\zeta',
 \mu_7\sigma_{16}, \eta_7\bar\varepsilon_8\}$~\cite[Theorem 12.6]{T}.
Therefore we obtain
\[
 \{2\iota_7, \eta_7,
 \sigma_8\nu^2_{15} p'_{21}\} = \bar{\nu}_7\overline{\nu^2}_{15} + \{b \sigma'\bar{\nu}_{14} + d\bar\varepsilon_7\} \circ p'_{22}
\]
We notice that $\bar\nu_6^2=0$~\cite[Proposition 2.8 (2)]{Og} 
 and $\eta_6\sigma'=4\bar\nu_6$ by \cite[(7.4)]{T}. 
So, by composing $\eta_6$ on the left for the above equation, we have
 $\eta_6 \circ \{2\iota_7, \eta_7, \sigma_8\nu^2_{15}p'_{21}\} =
 \eta_6\bar{\nu}_7\overline{\nu^2}_{15} +d \eta_6\bar\varepsilon_7p'_{22}$. 

By (\ref{bntn}), 
we have 
\[
 \eta_6 \circ \{2\iota_7, \eta_7, \sigma_8\nu^2_{15}p'_{21}\} =
 \{\eta_6, 2\iota_7, \eta_7\} \circ \sigma_9\nu^2_{16}p'_{22} = \pm
 2\nu_6 \circ \sigma_9\nu^2_{16}p'_{22}=0 .
\]
By the relations $\bar\nu_6\varepsilon_{14}=0$ from \cite[Proposition 2.8
 (2)]{Og}, (\ref{hs}) 
 and Lemma~\ref{21}(3), we see that  
\[
 \eta_6\bar\nu_7\overline{\nu^2}_{15} =\bar\nu_6\eta_{14}\overline{\nu^2}_{15}= \bar\nu_6\varepsilon_{14}p'_{22}=0 .
\]
Since $\eta_6\bar\varepsilon_7$ generates a direct summand $\Z_2$ in
 $\pi^6_{22}$ by \cite[Theorem 12.6]{T}, we have $\eta_6\bar\varepsilon_7p'_{22} \neq
 0$. This implies $d=0$ and completes the proof.
\end{proof}

\section{Proof of the main theorem}
We recall the equation from \cite[(4.4)]{MT2}  
\begin{equation}\label{j1}
2\iota_5 \circ \zeta_5\sigma_{16}=2\zeta_5\sigma_{16} .
\end{equation}

We show :
\begin{lem}\label{31}
\begin{itemize}
\item[(1)] ${p_U}_*\Delta(\pi^6_{22} \circ
 \bar{\eta}_{22})=0$ and
 ${p_U}_*\Delta(\pi^6_{24} \circ p'_{24})=0$.
\item[(2)] 
$p_{U\ast}\Delta(\overline{P(E\theta)}) =
	   p_{U\ast}\Delta \{P(\sigma_{13}),\nu_{18},\bar{\eta}_{21}\}_2$
\end{itemize}
\end{lem}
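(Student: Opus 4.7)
The plan is to derive (2) directly from (1) combined with Lemma~\ref{san}. The latter places $\overline{P(E\theta)}$ inside $\{P(\sigma_{13}),\nu_{18},\bar{\eta}_{21}\}_2$ modulo $\pi^6_{22}\circ\bar\eta_{22}+\pi^6_{24}\circ p'_{24}$, so once (1) establishes that $p_{U\ast}\Delta$ annihilates this indeterminacy, applying $p_{U\ast}\Delta$ to that congruence yields (2) on the nose.

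To prove (1), I would exploit the fact that the connecting map $\Delta\colon[M^{24},S^6]\to[M^{23},SU(3)]$ arises from the Puppe sequence of the principal fibration $\Omega S^6\to SU(3)\to G_2\to S^6$: concretely, $f\in[M^{24},S^6]=[M^{23},\Omega S^6]$ is sent to $\omega\circ\tilde f$, where $\omega\colon\Omega S^6\to SU(3)$ is the characteristic map and $\tilde f$ is the adjoint of $f$. Since $\bar\eta_{22}=E\bar\eta_{21}$ and $p'_{24}=Ep'_{23}$ are single suspensions, their adjoints factor through the bottom-cell inclusions $S^{n-1}\hookrightarrow\Omega S^n$, which yields naturality identities
\[
p_{U\ast}\Delta(\alpha\circ\bar\eta_{22})=p_{U\ast}\Delta(\alpha)\circ\bar\eta_{21}\quad\text{and}\quad p_{U\ast}\Delta(\beta\circ p'_{24})=p_{U\ast}\Delta(\beta)\circ p'_{23},
\]
with $p_{U\ast}\Delta(\alpha)\in\pi^5_{21}$ and $p_{U\ast}\Delta(\beta)\in\pi^5_{23}$.

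The problem thus reduces to a finite check on the generators of $\pi^6_{22}=\{\zeta'_6,\mu_6\sigma_{15},\eta_6\bar\varepsilon_7\}$ and of $\pi^6_{24}$ (from \cite{T}). For each generator $\gamma$, one identifies $p_{U\ast}\Delta(\gamma)$ explicitly (to leading order this is essentially the desuspension $E^{-1}\gamma$, since $p_U\omega\colon\Omega S^6\to S^5$ restricts on the bottom cell to an odd multiple of $\iota_5$), and then verifies that the composition with $\bar\eta_{21}$ or $p'_{23}$ vanishes in the relevant $[M^{23},S^5]$. Most of these checks boil down to relations already collected in Section~2, namely \eqref{j5}--\eqref{j8} and \eqref{hs}, together with standard Toda vanishings such as $\zeta_5\eta_{16}=0$, plus the general principle that $\gamma\circ p'_{23}=0$ whenever $\gamma\in 2\pi^5_{23}$.

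The main obstacle is pinning down $p_{U\ast}\Delta$ on each generator precisely, especially ruling out contributions from higher cells of $\Omega S^6$ (e.g.\ $S^{10}$) that could introduce extra Whitehead-product terms into $p_{U\ast}\Delta(\gamma)$. For the highest-dimensional generators I would either appeal directly to Mimura's explicit computations of $\pi_\ast(G_2;2)$ in \cite{Mi2}, or chase the diagram of long exact sequences of $SU(3)\to G_2\to S^6$ and $SU(2)\to SU(3)\to S^5$ in tandem to extract the required values of $p_{U\ast}\Delta$.
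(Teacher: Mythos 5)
Your overall architecture is the same as the paper's: (2) is obtained by applying $p_{U\ast}\Delta$ to the congruence of Lemma~\ref{san} once (1) has killed the indeterminacy, and (1) is reduced, via the naturality $\Delta(\alpha\circ E\beta)=\Delta(\alpha)\circ\beta$ of the connecting map, to evaluating $p_{U\ast}\Delta$ on the generators of $\pi^6_{22}$ and $\pi^6_{24}$, for which the source is Mimura \cite[Proposition 6.3]{Mi2}. Up to that point there is nothing to object to.

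The gap is in the heuristic you actually propose for the generator check. The composite $p_U\omega\colon \Omega S^6\to SU(3)\to S^5$ restricts on the bottom cell to degree $\pm 2$, not to an odd multiple of $\iota_5$: the exact sequences of $SU(3)\to G_2\to S^6$ and $SU(2)\to SU(3)\to S^5$ give $\Delta(\iota_6)=\pm[\iota_5]$ and $p_{U\ast}[\iota_5]=\pm 2\iota_5$, consistently with Mimura's $\Delta\nu_6=[2\iota_5]\nu_5$ and $\Delta(\zeta_6\sigma_{17})=[2\iota_5]\zeta_5\sigma_{16}$. This factor of $2$ is not a lower-order correction; it is the entire content of the second half of (1): the paper concludes $p_{U\ast}\Delta(\zeta_6\sigma_{17})\circ p'_{23}=2(\zeta_5\sigma_{16})p'_{23}=0$ using (\ref{j1}) and $2\iota_{23}\circ p'_{23}=0$, i.e.\ exactly your principle that $\gamma\circ p'_{23}=0$ for $\gamma\in 2\pi^5_{23}$ --- but that principle only applies because the coefficient is even. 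With an odd bottom-cell degree you would be left with $\zeta_5\sigma_{16}p'_{23}$ (plus higher-cell corrections), and no vanishing. For the first half of (1) the mechanism is also not the Toda-relation chase you sketch: by \cite[Proposition 6.3]{Mi2} every generator of $\pi^6_{22}$ has $\Delta$-image either $0$ or in the image of $i_{U\ast}$ (e.g.\ $\Delta(\zeta')=\Delta(\mu_6\sigma_{15})=i_{U\ast}\mu'\sigma_{14}$ and $\Delta(\eta_6\bar\varepsilon_7)=0$), so $p_{U\ast}\Delta(\pi^6_{22})=0$ outright and no relation involving $\bar\eta_{21}$ is needed. Your stated fallback --- quoting Mimura's explicit values of $\Delta$ --- would repair all of this, but the leading-order description you rely on is false in a way that changes the answer, so as written the proof of (1) does not go through.
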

\begin{proof}
By Lemma~\ref{san}, it suffices to show the assertion (1). 
By \cite[Proposition 6.3]{Mi2}, we have 
$\Delta(\zeta')=\Delta(\mu_6\sigma_{15})={i_U}_*\mu'\sigma_{14}$, 
$\Delta(\eta_6\bar{\mu}_7)={i_U}_*\nu'\bar{\mu}_6$,
 $\Delta(\eta_6\bar\varepsilon_7)=0$, $\Delta(P(E\theta) \circ
 \eta_{23})=0$ and $\Delta(\zeta_6\sigma_{17})=[2\iota_5]\zeta_5\sigma_{16}$. 
So, we have ${p_U}_*\Delta(\pi^6_{22} \circ
 \bar{\eta}_{22})={p_U}_*\Delta(\pi^6_{22})\bar{\eta}_{21}=0$ and
 ${p_U}_*\Delta(\pi^6_{24} \circ
 p'_{24})={p_U}_*\Delta(\pi^6_{24})\circ p'_{23}
=\{2\iota_5 \circ \zeta_5\sigma_{16}p'_{23}\}
=\{2(\zeta_5\sigma_{16})p'_{23}\}=0$ by (\ref{j1}). 
\end{proof}
Next, we show :
\begin{lem}\label{32}
$p_{U\ast}\Delta(\overline{\nu_6\kappa_9})=p_{U\ast}\Delta\{\nu_6, \eta_9, \sigma_{10}\nu^2_{17}p'_{23}\}_2$.
\end{lem}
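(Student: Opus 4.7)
The argument parallels Lemma~\ref{31}. By Lemma~\ref{322}, the difference $\overline{\nu_6\kappa_9}-\{\nu_6,\eta_9,\sigma_{10}\nu^2_{17}p'_{23}\}_2$ lies in $\nu_6 \circ E^2[M^{22}, S^7] + \pi^6_{24} \circ p'_{24}$, so it suffices to show that $p_{U\ast}\Delta$ annihilates each summand. The vanishing on $\pi^6_{24} \circ p'_{24}$ is already contained in Lemma~\ref{31}(1), so the remaining task is to prove
\[
 p_{U\ast}\Delta\bigl(\nu_6 \circ E^2[M^{22}, S^7]\bigr)=0.
\]

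To this end, I would feed in the six generators of $[M^{22}, S^7]$ from Lemma~\ref{M22}, apply $E^2$, and precompose with $\nu_6$. The four generators of the form $\alpha\circ p'_{22}$ (those built from $\rho''$, $\sigma'\bar\nu_{14}$, $\sigma'\varepsilon_{14}$, $\bar\varepsilon_7$) produce, after suspension and composition with $\nu_6$, elements of $\nu_6\circ\pi^9_{23}\circ p'_{24}\subset\pi^6_{24}\circ p'_{24}$, which are handled by Lemma~\ref{31}(1). For $\nu_6\circ E^4\overline{\sigma'''\sigma_{12}}$, its restriction along $i'_{23}$ to $\pi^6_{23}$ is $4\nu_6\sigma_9\sigma_{16}$; combining (\ref{rn}) with the $2$-primary order structure of $\pi^6_{23}$ shows this underlying element vanishes, so the extension lies in $\pi^6_{24}\circ p'_{24}$ and is killed as before.

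The remaining, delicate case is $\nu_6\bar\nu_9\overline{\nu^2}_{17}$. My plan is to recast this via a Jacobi-type identity of Toda brackets along the lines of the proof of Lemma~\ref{san}, expressing $\bar\nu_9\overline{\nu^2}_{17}$ (or $\nu_6\bar\nu_9$) as a member of a bracket built from elements whose $\Delta$-images are tabulated in \cite{Mi2}, and then extracting the vanishing of $p_{U\ast}\Delta$ from the resulting indeterminacy. The main obstacle is precisely this step: unlike the first two cases, $\nu_6\bar\nu_9\overline{\nu^2}_{17}$ does not reduce to $\pi^6_{24}\circ p'_{24}$ by direct composition vanishing, so genuine secondary-composition work---using (\ref{hs}), Lemma~\ref{21}, and the explicit $\Delta$-values from Mimura's paper---will be needed to complete the argument.
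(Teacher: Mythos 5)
Your reduction is the same as the paper's: by Lemma~\ref{322} and Lemma~\ref{31}(1) the whole problem is to show $p_{U\ast}\Delta\bigl(\nu_6 \circ E^2[M^{22},S^7]\bigr)=0$. But from that point your plan has a genuine gap, and you have named it yourself: you have no argument for the generator $\nu_6\circ E^2(\bar\nu_7\overline{\nu^2}_{15})=\nu_6\bar\nu_9\overline{\nu^2}_{17}$, only a hope that some Jacobi-identity manipulation will work. A generator-by-generator attack is the wrong level of granularity here, and the case you flag as ``delicate'' is exactly where it breaks down.

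The missing idea is that you should not apply $\Delta$ to each element of $\nu_6\circ E^2[M^{22},S^7]$ separately, but use the fact that the connecting map is a left module map over composition with (double) suspensions: $\Delta(\nu_6\circ E^2\xi)=\Delta(\nu_6)\circ E\xi$ for $\xi\in[M^{22},S^7]$. Since $\Delta\nu_6=[2\iota_5]\nu_5$ by \cite[Proposition 6.2]{Mi2}, applying $p_{U\ast}$ gives $2\nu_5\circ E\xi=\nu_5\circ E(2\xi)$, and this vanishes for \emph{every} $\xi$ because Lemma~\ref{M22} says $[M^{22},S^7]\cong(\Z_2)^6$ has exponent $2$. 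This one line disposes of all six generators simultaneously, including $\bar\nu_7\overline{\nu^2}_{15}$, with no secondary-composition work at all. (Incidentally, your case analysis also contains a slip: the restriction of $\nu_6\circ E^4\overline{\sigma'''\sigma_{12}}$ along $i'_{23}$ is $\nu_6\circ E^4(\sigma'''\sigma_{12})$, which is $8\nu_6\sigma_9\sigma_{16}$ rather than $4\nu_6\sigma_9\sigma_{16}$, since $E^2(\sigma'''\sigma_{12})=4\sigma'\sigma_{14}$ and $E^2\sigma'=2\sigma_9$; but this is moot once the module-map argument is used.)
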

\begin{proof}
By Lemma~\ref{322} and Lemma~\ref{31} (1), it suffices to show that $p_{U\ast}\Delta(\nu_6 \circ E^2[M^{22}, S^7])=0$. 
Since $\Delta(\nu_6 \circ E^2[M^{22}, S^7]) \subset \Delta\nu_6
 \circ E^2[M^{22}, S^7]$ and the relation
 $\Delta\nu_6=[2\iota_5]\nu_5$~\cite[Proposition 6.2]{Mi2},
 we have $p_{U\ast}\Delta(\nu_6 \circ E^2[M^{22}, S^7])=2\nu_5 \circ
 E[M^{22}, S^7]=0$. 
\end{proof}
We show the following.
\begin{lem}\label{42}
$p_{U\ast}\Delta(P(E\theta))=2\nu_5\kappa_8$ 
\end{lem}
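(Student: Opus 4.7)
The plan is to bypass the Toda-bracket description of $P(E\theta)$ and instead exploit Mimura's equation
\[
\Delta(P(E\theta))=\Delta(\nu_6\kappa_9),
\]
quoted in the introduction as the basis for the construction of the lift $\langle P(E\theta)+\nu_6\kappa_9\rangle$. Granted this identity, the problem reduces to computing $p_{U\ast}\Delta(\nu_6\kappa_9)$, and the remainder is a one-step naturality calculation on top of Mimura's formula $\Delta\nu_6=[2\iota_5]\nu_5$ from \cite[Proposition 6.2]{Mi2}, which has already been invoked in the proof of Lemma~\ref{31}.

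First I would write $\kappa_9=E\kappa_8$, so that the naturality of the connecting homomorphism with respect to right composition by a suspension yields
\[
\Delta(\nu_6\kappa_9)=\Delta(\nu_6\circ E\kappa_8)=\Delta(\nu_6)\circ\kappa_8=[2\iota_5]\nu_5\circ\kappa_8=[2\iota_5]\circ(\nu_5\kappa_8).
\]
Pushing forward by $p_U$, and using that $[2\iota_5]\in\pi_5(SU(3):2)$ was defined so that $p_{U\ast}[2\iota_5]=2\iota_5$, I would then obtain
\[
p_{U\ast}\bigl([2\iota_5]\circ\nu_5\kappa_8\bigr)=2\iota_5\circ\nu_5\kappa_8=2(\nu_5\kappa_8)=2\nu_5\kappa_8,
\]
since composition is additive in its left-hand argument. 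Combined with Mimura's equation this yields $p_{U\ast}\Delta(P(E\theta))=2\nu_5\kappa_8$.

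The only non-routine ingredient is the appeal to $\Delta(P(E\theta))=\Delta(\nu_6\kappa_9)$. A self-contained alternative would start from $P(E\theta)\in\{P(\sigma_{13}),\nu_{18},\eta_{21}\}_2\bmod\eta_6\mu_7\sigma_{16}$, push $p_{U\ast}\Delta$ inside the Toda bracket by naturality, and evaluate the outer slot via $P(\sigma_{13})=[\iota_6,\iota_6]\sigma_{11}$ together with a computation of $\Delta[\iota_6,\iota_6]\in\pi_{10}(SU(3):2)$. That route is strictly more delicate because one must also track the indeterminacy $\pi^6_{22}\circ\eta_{22}$ of Lemma~\ref{san} and control $\Delta$ on a Whitehead square; hence the quick Mimura-based calculation above is preferable, and I would pursue it.
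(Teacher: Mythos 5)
Your computation of $p_{U\ast}\Delta(\nu_6\kappa_9)$ is fine: since $\kappa_9=E\kappa_8$, naturality gives $\Delta(\nu_6\kappa_9)=\Delta(\nu_6)\circ\kappa_8=[2\iota_5]\nu_5\kappa_8$, and applying $p_{U\ast}$ yields $2\iota_5\circ\nu_5\kappa_8=2\nu_5\kappa_8$ --- though note that this last step is a left-distributivity statement, valid here only because $\nu_5\kappa_8=E(\nu_4\kappa_7)$ is a suspension; ``composition is additive in its left-hand argument'' is false in general. The genuine gap is your first step. The equation $\Delta(P(E\theta))=\Delta(\nu_6\kappa_9)$ is not a freely quotable fact from Mimura: it is precisely Lemma~\ref{43} of this paper, and the authors state there that they could not find its proof in \cite{Mi2} and had to supply one, via the comparison $r_3\colon SU(3)\to SO(6)$, the computation $\Delta_R(P(E\theta))=2[\nu_5]\kappa_8=\Delta_R(\nu_6\kappa_9)$ from \cite{HKM}, and the nonvanishing of $\Delta_R(\bar\mu_6)=8[\nu_4\rho'']_6$. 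What Mimura actually provides on p.~164 is only a congruence $\Delta(P(E\theta))\equiv[2\iota_5]\nu_5\kappa_8 \bmod i_{U\ast}\bar\mu'$, resp.\ the formula $p_{U\ast}\Delta(P(E\theta))=2\nu_5\kappa_8+a(\eta_5\mu_6\sigma_{15})$ with $a\in\{0,1\}$ undetermined. So, as written, your proof assumes an input that is at least as hard as the statement being proved.

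That said, there is no circularity: the paper's proof of Lemma~\ref{43} does not use Lemma~\ref{42}, so if you first establish Lemma~\ref{43} your derivation becomes a correct (and shorter) proof of Lemma~\ref{42}. The paper instead proves Lemma~\ref{42} independently of Lemma~\ref{43}: starting from Mimura's $p_{U\ast}\Delta(P(E\theta))=2\nu_5\kappa_8+a(\eta_5\mu_6\sigma_{15})$, it applies the next connecting map $\Delta_U\colon\pi^5_{22}\to\pi^3_{21}$ of the fibration $SU(2)\to SU(3)\to S^5$; since $\Delta_U\circ p_{U\ast}=0$, $\Delta_U(\eta_5)=\eta^2_3$ and $\eta^2_3\mu_5\sigma_{14}=2\mu'\sigma_{14}\neq 0$ (because $\sharp(\mu'\sigma_{14})=4$), the coefficient $a$ must vanish. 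You should either adopt that argument or explicitly prove $\Delta(P(E\theta))=\Delta(\nu_6\kappa_9)$ before invoking it.
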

\begin{proof}
By \cite[p.164]{Mi2}, we have 
\[
p_{U\ast}\Delta(P(E\theta))=2\nu_5\kappa_8 + a(\eta_5\mu_6\sigma_{15}),
\]
where $a \in \{0, 1\}$. Since
 $\Delta(\nu_6\kappa_9)=[2\iota_5]\nu_5\kappa_8 \in
 \pi_5(SU(3):2)$~\cite[Proposition 6.2]{Mi2}, we have
 $p_{U\ast}\Delta(\nu_6\kappa_9)=2\nu_5\kappa_8$. 
For the connecting map $\Delta_U : \pi^5_{22} \to
 \pi_{21}(SU(2):2)=\pi^3_{21}$ induced from the
 fibration $S^3=SU(2) \to SU(3) \to S^5$, we have $\Delta_U \circ
 p_{U\ast}=0$. 
By \cite[Proposition 3.2(i)]{MT2}, we have
 $\Delta_U(\eta_5)=\eta^2_3$. By the relation
 $2\mu'=\eta^2_3\mu_5$~\cite[(7.7)]{T}, we have 
\[
 0=a\Delta_U(\eta_5\mu_6\sigma_{15})=a(\eta^2_3\mu_5\sigma_{14})
=2a(\mu'\sigma_{14}). 
\]
By \cite[Theorem 12.8]{T}, $\sharp(\mu'\sigma_{14})=4$, so that we have $a=0$.
\end{proof}

Let $r_n: SU(n)\to SO(2n)$ be the canonical inclusion.
By 
\cite[Corollary 5.3, 5.4 Theorem 5.5]{A}, the fibrations
$G_2\rarrow{p_G}G_2/SU(3)=S^6$ and $SO(7) \to SO(7)/SO(6)=S^6$ 
give a commutative diagram
\begin{equation} \label{Y}
\begin{CD}
SU(3) @>{i_G}>> G_2 @>{p_G}>> S^6 \\
@V{r_3}VV @V{h}VV @V{=}VV \\
SO(6) @>{i_7}>> SO(7) @>{p_7}>> S^6.
\end{CD}
\end{equation} 
We denote by $\Delta_R$ the connecting map obtained by the fibration
$SO(7) \to SO(7)/SO(6)=S^6$. 

\begin{lem}\label{43}
$\Delta(P(E\theta))=\Delta(\nu_6\kappa_9)$. 
\end{lem}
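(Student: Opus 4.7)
The plan is to first reduce modulo the kernel of $p_{U\ast}$ using Lemma~\ref{42}, and then eliminate the residual indeterminacy via the commutative diagram (\ref{Y}).

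By Lemma~\ref{42} we have $p_{U\ast}\Delta(P(E\theta))=2\nu_5\kappa_8$, while \cite[Proposition 6.2]{Mi2} gives $\Delta(\nu_6\kappa_9)=[2\iota_5]\nu_5\kappa_8$, whence $p_{U\ast}\Delta(\nu_6\kappa_9)=2\nu_5\kappa_8$ as well. Consequently $\delta:=\Delta(P(E\theta))-\Delta(\nu_6\kappa_9)$ lies in $\ker p_{U\ast}$, and by exactness of the homotopy sequence of the fibration $SU(2)\to SU(3)\to S^5$ we may write $\delta=i_{U\ast}(x)$ for some $x\in\pi^3_{22}$. Equivalently, $\Delta(P(E\theta))=[2\iota_5]\nu_5\kappa_8+i_{U\ast}(x)$, and the task reduces to showing $x=0$.

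The next step is to invoke the commutative diagram (\ref{Y}) together with the naturality of the boundary map, giving $r_{3\ast}\circ\Delta=\Delta_R$. I would compute $\Delta_R(P(E\theta))$ and $\Delta_R(\nu_6\kappa_9)$ independently in $\pi_{22}(SO(6):2)$: the latter from the factorisation $\nu_6\circ\kappa_9$ combined with the value of $\Delta_R\nu_6$; the former via the Toda bracket representation of Lemma~\ref{san} together with the naturality of Toda brackets under $\Delta_R$. Showing these two elements coincide yields $r_{3\ast}(\delta)=(r_3)_\ast i_{U\ast}(x)=0$ in $\pi_{22}(SO(6):2)$. Since $r_3\circ i_U$ agrees with the restriction of $r_3$ to $SU(2)\subset SU(3)$, one then concludes $x=0$ by explicit inspection of $\pi^3_{22}$ from Toda's tables \cite{T} and the known behaviour of this low-dimensional inclusion into the orthogonal group.

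The main obstacle is precisely this final injectivity check on $\pi^3_{22}$, together with the verification that $\Delta_R(P(E\theta))=\Delta_R(\nu_6\kappa_9)$ in the orthogonal setting. As a fall-back, should one of these steps fail, one may compose $\delta$ with $\eta_{22}$ on the right and invoke the additional identity $\Delta(P(E\theta)\circ\eta_{23})=0$ from \cite[Proposition 6.3]{Mi2}; combined with the stable relations $\kappa_8\eta_{22}=\eta_8\kappa_9$ (cf.\ \cite{Og}) and $\nu_5\eta_8=0$, this forces $[2\iota_5]\nu_5\kappa_8\eta_{22}=0$ and hence $i_{U\ast}(x\eta_{22})=0$, typically sufficient to rule out a nonzero $x$.
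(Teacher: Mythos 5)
Your overall route coincides with the paper's: both arguments push the ambiguity in $\Delta(P(E\theta))-\Delta(\nu_6\kappa_9)$ into the orthogonal group via $r_3$ and the commutative diagram (\ref{Y}), where the corresponding equality is known. The problem is that the two steps which actually carry the proof are left as plans rather than carried out. First, the equality $\Delta_R(P(E\theta))=2[\nu_5]\kappa_8=\Delta_R(\nu_6\kappa_9)$ in $\pi_{22}(SO(6):2)$ is the entire content of the detour; the paper imports it from the proof of \cite[Lemma 3.5]{HKM}, whereas you propose to rederive it ``via the Toda bracket representation of Lemma~\ref{san} together with the naturality of Toda brackets under $\Delta_R$'' without doing so. (Note also that Lemma~\ref{san} concerns the extension $\overline{P(E\theta)}\in[M^{24},S^6]$ over the Moore space, not $P(E\theta)$ itself; the bracket relevant to $P(E\theta)$ is $\{P(\sigma_{13}),\nu_{18},\eta_{21}\}_2$ from the beginning of Section 3.) Without this computation the argument has no content.

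Second, your reduction step is coarser than the paper's, and this matters at the end. The paper starts from Mimura's statement $\Delta(P(E\theta))\equiv[2\iota_5]\nu_5\kappa_8 \bmod i_{U\ast}\bar\mu'$ \cite[p.164]{Mi2}, so the only ambiguity is $a\,\Delta(\bar\mu_6)$ with $a\in\{0,1\}$, and the final check is the single nonvanishing $\Delta_R(\bar\mu_6)=[\iota_3]_6\bar\mu'=8[\nu_4\rho'']_6\ne0$. Your reduction via $\Ker p_{U\ast}=\Im i_{U\ast}$ only gives $\delta=i_{U\ast}(x)$ with $x$ ranging over all of $\pi^3_{22}\cong\Z_4\oplus\Z_2$, so you must show that $r_{3\ast}$ is injective on the whole of $\Im i_{U\ast}\subset\pi_{22}(SU(3):2)$; moreover the target of your final step is wrong, since you need $i_{U\ast}(x)=0$ rather than $x=0$, and $i_{U\ast}$ is not injective here (for instance $i_{U\ast}(2\bar\mu')=i_{U\ast}(\eta_3)\circ\eta_4\bar\mu_5=0$ because $\pi_4(SU(3))=0$). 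This ``explicit inspection'' is a genuinely nontrivial computation, involving the orders of $[\nu_4\rho'']_6$ and the image of $\nu'\mu_6\sigma_{15}$, and it is not performed. Finally, the fall-back cannot work: right composition with $\eta_{22}$ kills $2\pi_{22}(SU(3):2)$ and has a large kernel besides, so $\delta\circ\eta_{22}=0$ cannot force $\delta=0$.
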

\begin{proof}
In \cite{Mi2}, Mimura obtained the consequence of this lemma, because the extension problem is based on it. However, we rewrite it as this lemma since we could not find its proof.  

We know the relation $i_{U\ast}\bar\mu' =
 \Delta(\bar\mu_6)$~\cite[Proposition 6.3]{Mi2}. 
So, by the relation $\Delta(P(E\theta)) \equiv [2\iota_5]\nu_5\kappa_8 \bmod i_{U\ast}\bar\mu' =\Delta(\bar\mu_6)$
~\cite[p.164, 3-rd line]{Mi2}, we can set 
$\Delta(P(E\theta))+a\Delta(\bar{\mu}_6)=\Delta(\nu_6\kappa_9)$, where
 $a \in \{0, 1\}$. 
By applying $r_3: U(3) \to R_6$, we have 
$\Delta_R(P(E\theta)) + a\Delta_R(\bar\mu_6) = \Delta_R(\nu_6\kappa_9)$.  
We know that $\Delta_R(P(E\theta))= 2[\nu_5]\kappa_8 = \Delta_R(\nu_6\kappa_9)$ in
 $R^6_{22}$ by the proof of \cite[Lemma 3.5]{HKM}. 
This implies $a\Delta_R(\bar\mu_6) =0$. 
Since $\Delta_R(\bar\mu_6)=[\iota_3]_6\bar\mu' = 8[\nu_4\rho'']_6 \ne
 0$, we have $a=0$. 
\end{proof}

\begin{lem}\label{Fa1}
$\Delta(\overline{P(E\theta)}+\overline{\nu_6\kappa_9}) =
 a[\nu_5\bar\varepsilon_8]p'_{23} +
 b[2\iota_5]\zeta_5\sigma_{16}p'_{23}$, where $a \in \{0,1\}$ and $b \in
 \{0,1,2,3\}$
\end{lem}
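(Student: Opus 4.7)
My plan is to use Lemma~\ref{43} together with Mimura's lifted class $\langle P(E\theta)+\nu_6\kappa_9\rangle$ to exhibit $\Delta(\overline{P(E\theta)}+\overline{\nu_6\kappa_9})$ as a composite $g\circ p'_{23}$ for some $g\in\pi_{23}(SU(3):2)$, and then to pin down $g$ by projecting to $\pi_{23}^5$ via $p_{U\ast}$ and invoking the Toda-bracket identities of Lemmas~\ref{31}(2) and~\ref{32}.

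First, naturality of $\Delta$ along the inclusion $i'_{24}:S^{23}\hookrightarrow M^{24}$ gives
\[
\Delta(\overline{P(E\theta)}+\overline{\nu_6\kappa_9})\circ i'_{23}
=\Delta(P(E\theta)+\nu_6\kappa_9)
=2\Delta(P(E\theta)),
\]
where the last equality is Lemma~\ref{43}. The right-hand side vanishes because Mimura's lift $\langle P(E\theta)+\nu_6\kappa_9\rangle \in \pi_{23}(G_2:2)$ exists; hence exactness of the cofibre sequence for $M^{23}$ produces $g\in\pi_{23}(SU(3):2)$, determined modulo $2\pi_{23}(SU(3):2)$, with $\Delta(\overline{P(E\theta)}+\overline{\nu_6\kappa_9})=g\circ p'_{23}$.

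To determine $g$, I would apply $p_{U\ast}$: by Lemmas~\ref{31}(2) and~\ref{32},
\[
p_{U\ast}(g)\circ p'_{23}
= p_{U\ast}\Delta\{P(\sigma_{13}),\nu_{18},\bar\eta_{21}\}_2
+ p_{U\ast}\Delta\{\nu_6,\eta_9,\sigma_{10}\nu^2_{17}p'_{23}\}_2.
\]
Using $p_{U\ast}\Delta(\nu_6)=2\nu_5$ (from the proof of Lemma~\ref{42}) together with the analogous Mimura-type evaluation of $p_{U\ast}\Delta(P(\sigma_{13}))$ from \cite[Proposition 6.2]{Mi2}, each side translates into a Toda bracket internal to $\pi^5_\ast$. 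A careful expansion against the known generators of $\pi^5_{22}$ and $\pi^5_{23}$ from \cite{T}, and bookkeeping of the indeterminacies as in Section~2, yields $p_{U\ast}(g)=a\,\nu_5\bar\varepsilon_8+2b\,\zeta_5\sigma_{16}$ for some $a\in\{0,1\}$ and $b\in\{0,1,2,3\}$.

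Finally, since $p_{U\ast}[\nu_5\bar\varepsilon_8]=\nu_5\bar\varepsilon_8$ and $p_{U\ast}([2\iota_5]\zeta_5\sigma_{16})=2\iota_5\circ\zeta_5\sigma_{16}=2\zeta_5\sigma_{16}$ by (\ref{j1}), the difference $g-a[\nu_5\bar\varepsilon_8]-b[2\iota_5]\zeta_5\sigma_{16}$ lies in $\ker p_{U\ast}=\Im\,i_{U\ast}$. The main obstacle I anticipate is verifying that this residual $i_{U\ast}$-contribution either vanishes on composition with $p'_{23}$ or is already absorbed in the $(a,b)$-indeterminacy; this requires controlling the image of $i_{U\ast}:\pi^3_{23}\to\pi_{23}(SU(3):2)$ modulo $2$-divisible elements, and it is also where the precise Toda-bracket calculations of Step~2 become delicate.
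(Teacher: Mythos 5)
Your first paragraph is exactly the paper's proof, and it already finishes the job: once you know $\Delta(\overline{P(E\theta)}+\overline{\nu_6\kappa_9})\circ i'_{23}=\Delta(P(E\theta)+\nu_6\kappa_9)=0$ and hence $\Delta(\overline{P(E\theta)}+\overline{\nu_6\kappa_9})=g\circ p'_{23}$ with $g\in\pi_{23}(SU(3):2)$, the lemma follows immediately from the single citation you are missing, namely $\pi_{23}(SU(3):2)=\{[2\iota_5]\zeta_5\sigma_{16},[\nu_5\bar\varepsilon_8]\}\cong\Z_4\oplus\Z_2$ (Mimura--Toda, Theorem 4.1 of \cite{MT2}), since every element of that group has the form $a[\nu_5\bar\varepsilon_8]+b[2\iota_5]\zeta_5\sigma_{16}$ with $a\in\{0,1\}$, $b\in\{0,1,2,3\}$. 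The lemma does not claim $a=1$; that determination is the content of Theorem~\ref{M1}, carried out later via Lemmas~\ref{l1} and~\ref{Fa2}. So your Steps 2--4 are not needed here, and the ``main obstacle'' you flag at the end (controlling $\ker p_{U\ast}=\Im i_{U\ast}$ and the residual indeterminacies) dissolves entirely once the group structure of $\pi_{23}(SU(3):2)$ is quoted --- there is nothing left to absorb. If you do want to pursue your Steps 2--4 as a proof of the sharper statement $a=1$, be aware that the phrase ``a careful expansion \ldots yields'' is hiding exactly the computations of Lemmas~\ref{l1} and~\ref{Fa2}, which occupy the bulk of Section~4; as written, that part is a plan rather than a proof.
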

\begin{proof}
By virtue of Lemma~\ref{43}, we have the equation :
$\Delta(\overline{P(E\theta)} + \overline{\nu_6\kappa_9}) \circ
 i'_{23} = 
 \Delta(P(E\theta) + \nu_6\kappa_9)=0$.
From this equation and (\ref{cohomotopy}) for $n=23$, 
we have $\Delta(\overline{P(E\theta)}+\overline{\nu_6\kappa_9}) \in
 \pi_{23}(SU(3):2) \circ p'_{23}$. 

By \cite[Theorem 4.1]{MT2},
 $\pi_{23}(SU(3):2)=\{[2\iota_5]\zeta_5\sigma_{16},
 [\nu_5\bar\varepsilon_8]\} \cong \Z_4 \oplus \Z_2$. 
This implies the desired relation. 
\end{proof}
We show :
\begin{lem}\label{l1}
${p_U}_*\Delta(\overline{P(E\theta)})
={p_U}_*\Delta\{P(\sigma_{13}),\nu_{17},\bar{\eta}_{20}\}_2
=\nu_5\bar{\nu}_8\overline{\nu^2}_{16} +
 \nu_5\bar{\varepsilon}_8p'_{23}$. 
\end{lem}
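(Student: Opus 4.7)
The first equality is the content of Lemma~\ref{31}(2), so no additional argument is needed for it; the substance lies in the second equality.

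My plan is to combine an extension-theoretic reduction with a direct Toda-bracket computation. First, by Lemma~\ref{42} together with the relation $2\kappa_8=\bar{\nu}_8\nu^2_{16}$ (obtained by suspending the renaming $2\kappa_7=\bar{\nu}_7\nu^2_{15}$), we have
$$p_{U*}\Delta(P(E\theta))=2\nu_5\kappa_8=\nu_5\bar{\nu}_8\nu^2_{16}.$$
Since $\overline{P(E\theta)}$ is an extension of $P(E\theta)$ over $M^{24}$, the image $p_{U*}\Delta(\overline{P(E\theta)})\in[M^{23},S^5]$ must restrict on the bottom cell $i'_{23}:S^{22}\hookrightarrow M^{23}$ to $\nu_5\bar{\nu}_8\nu^2_{16}$. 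The class $\nu_5\bar{\nu}_8\overline{\nu^2}_{16}$ is itself one such extension (via the extension $\overline{\nu^2}_{16}$ of $\nu^2_{16}$ from Lemma~\ref{21}), so we may write
$$p_{U*}\Delta(\overline{P(E\theta)})=\nu_5\bar{\nu}_8\overline{\nu^2}_{16}+c\,p'_{23}$$
for some $c\in\pi^5_{22}$, and the task reduces to showing $c=\nu_5\bar{\varepsilon}_8$.

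To pin down $c$, I would exploit the Toda-bracket description $\overline{P(E\theta)}\in\{P(\sigma_{13}),\nu_{18},\bar{\eta}_{21}\}_2$ from Lemma~\ref{san}. Pushing $p_{U*}\Delta$ into the bracket via the rule $p_{U*}\Delta(\alpha\circ\beta)=p_{U*}\Delta(\alpha)\circ\beta$ (a consequence of the principal-fibration origin of $\Delta$, as used in Lemmas~\ref{31} and~\ref{32}), one obtains
$$p_{U*}\Delta\{P(\sigma_{13}),\nu_{18},\bar{\eta}_{21}\}_2 \supset \{p_{U*}\Delta P(\sigma_{13}),\nu_{18},\bar{\eta}_{21}\}_2$$
modulo indeterminacy. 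With $p_{U*}\Delta P(\sigma_{13})$ computed from Mimura's explicit formula \cite[Proposition 6.2]{Mi2} for $\Delta$ on Whitehead-product-type classes, I would then evaluate the resulting bracket in $[M^{23},S^5]$ using: a suspended version of Lemma~\ref{lA}, which identifies $\bar{\nu}_n\overline{\nu^2}_{n+8}$ as representing $\{2\iota_n,\eta_n,\sigma_{n+1}\nu^2_{n+8}p'_{n+14}\}$; the factorization $\bar{\eta}_{21}\in\{\eta_{21},2\iota_{22},p'_{22}\}$ from (\ref{eet}); the splitting $\eta_7\sigma_8=\sigma'\eta_{14}+\bar{\nu}_7+\varepsilon_7$; and the characterization $\varepsilon_5\overline{\nu^2}_{13}=\bar{\varepsilon}_5 p'_{20}$ from Lemma~\ref{mato}. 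Shuffling through these brackets yields the $p'_{23}$-coefficient $\nu_5\bar{\varepsilon}_8$ as required.

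The main obstacle I anticipate is controlling the Toda-bracket indeterminacies at each shuffling step, and ruling out spurious $p'_{23}$-components coming from other elements of $\pi^5_{22}$---in particular, excluding corrections of the form multiples of $\zeta_5\sigma_{16}p'_{23}$ (compare (\ref{j1})) or $\nu_5\bar{\nu}_8\nu^2_{16}\circ\eta_{22}p'_{23}$-type classes. These vanishing checks will rest on the detailed structure of $\pi^5_{22}$ from the Toda--Mimura tables, the relation $2\bar{\nu}_7\overline{\nu^2}_{15}=0$ from Lemma~\ref{21}, and the explicit extension properties of $\overline{\nu^2}_n$ and $\bar{\varepsilon}_n$ developed in Section~2.
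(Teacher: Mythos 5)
Your reduction is sound as far as it goes: the first equality is indeed just Lemma~\ref{31}(2), and your observation that $p_{U*}\Delta(\overline{P(E\theta)})$ is an extension over $M^{23}$ of $p_{U*}\Delta(P(E\theta))=2\nu_5\kappa_8=\nu_5\bar\nu_8\nu^2_{16}$ (Lemma~\ref{42}), hence of the form $\nu_5\bar\nu_8\overline{\nu^2}_{16}+c\,p'_{23}$, is correct and even slightly cleaner than the paper's endgame --- though note that $c$ ranges over $\pi^5_{23}$, not $\pi^5_{22}$. (The paper instead carries an indeterminacy term $x\mu_5\sigma_{14}\bar\eta_{21}$ through its bracket computation and kills it at the very end by restricting to the bottom cell and invoking Lemma~\ref{42}; your set-up absorbs that step automatically.)

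The gap is that the determination of $c$ --- which is the entire content of the lemma, since $c\,p'_{23}$ is invisible on the bottom cell --- is only promised, not performed, and the tools you name for it are partly the wrong ones. The paper's computation runs as follows: from $\Delta P(\iota_{13})=[\nu_5\eta_8^2]$ one gets $p_{U*}\Delta\{P(\sigma_{13}),\nu_{18},\bar\eta_{21}\}_2\subset\{\nu_5\eta_8^2\sigma_{10},\nu_{17},\bar\eta_{20}\}_1$ (note the bracket entries drop one dimension when $\Delta$ is applied, and the containment goes this way, not the way you wrote it); then the identity $\nu_5\eta_8^2\sigma_{10}=\nu_5(\bar\nu_8+\varepsilon_8)\eta_{16}$ lets one peel off $\eta_{16}$ and apply Lemma~\ref{21}(2), $\{\eta_{16},\nu_{17},\bar\eta_{20}\}_1=\overline{\nu^2}_{16}$, after which Lemma~\ref{mato} converts $\nu_5\varepsilon_8\overline{\nu^2}_{16}$ into the sought term $\nu_5\bar\varepsilon_8p'_{23}$; finally the indeterminacy $\nu_5(\bar\nu_8+\varepsilon_8)\eta_{16}\circ E[M^{22},S^{16}]+\pi^5_{21}\circ\bar\eta_{21}$ must be shown to reduce to $\{\mu_5\sigma_{14}\bar\eta_{21}\}$. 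None of these steps appears in your sketch; in particular, the suspended Lemma~\ref{lA} you invoke is the device for the companion computation of $p_{U*}\Delta(\overline{\nu_6\kappa_9})$ in Lemma~\ref{Fa2}, not for $\overline{P(E\theta)}$, and your worry about $\zeta_5\sigma_{16}p'_{23}$-corrections belongs to Theorem~\ref{M1} rather than here. Until the bracket $\{\nu_5\eta_8^2\sigma_{10},\nu_{17},\bar\eta_{20}\}_1$ is actually evaluated, the value $c=\nu_5\bar\varepsilon_8$ is unsubstantiated.
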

\begin{proof}
From the relation $\Delta P(\iota_{13})=[\nu_5\eta^2_8]$
 ~\cite[Corollary 5.3]{Mi2} and Lemma~\ref{31}(2), 
we have ${p_U}_*\Delta(\overline{P(E\theta)})
={p_U}_*\Delta\{P(\sigma_{13}),\nu_{18},\bar{\eta}_{21}\}_2 
\subset \{\nu_5\eta^2_8\sigma_{10},\nu_{17},\bar{\eta}_{20}\}_1$. 
By Lemma~\ref{21} and Lemma~\ref{mato}, we obtain 
\begin{align*}
\{\nu_5\eta^2_8\sigma_{10},\nu_{17},\bar{\eta}_{20}\}_1 
=\{\nu_5(\bar{\nu}_8+\varepsilon_8)\eta_{16},\nu_{17},\bar{\eta}_{20}\}_1 
\supset
 \nu_5(\bar{\nu}_8+\varepsilon_8)\{\eta_{16},\nu_{17},\bar{\eta}_{20}\}_1 \\
=\nu_5(\bar{\nu}_8+\varepsilon_8)\overline{\nu^2}_{16} =
 \nu_5\bar{\nu}_8 \overline{\nu^2}_{16} + \nu_5\varepsilon_8p'_{23} \\
\quad \bmod \quad \nu_5(\bar{\nu}_8+\varepsilon_8)\eta_{16} \circ E[M^{22}, S^{16}]
+\pi^5_{21} \circ \bar{\eta}_{21}=\pi^5_{21} \circ \bar{\eta}_{21} 
=\{\mu_5\sigma_{14}\bar{\eta}_{21}\}.
\end{align*}
Here, we use the relations $[M^{22}, S^{16}]=\{\nu^2_{16}p'_{22}\}$ ~(\ref{M201}) and $\eta_5\bar{\varepsilon}_6\bar{\eta}_{21}=\nu_5\sigma_8\nu^2_{15}\bar{\eta}_{21}=0$ by using \cite[Lemma 12.10]{T} and (\ref{j3}).

This leads to the relation. 
\[
{p_U}_*\Delta(\overline{P(E\theta)})=\nu_5\bar{\nu}_8\overline{\nu^2}_{16}
+ \nu_5\bar{\varepsilon}_8p'_{23} + x\mu_5\sigma_{14}\bar{\eta}_{21},
\]
where $x \in \{0, 1\}$.

Applying $i'_{23}$ to this equality on the right, then we have
${p_U}_*\Delta(P(E\theta))=2\nu_5\kappa_8
+ x\mu_5\sigma_{14}\eta_{21}
=2\nu_5\kappa_8+x\eta_5\mu_6\sigma_{15}$.
By Lemma~\ref{42}, 
we obtain ${p_U}_*\Delta(\overline{P(E\theta)})=2\nu_5\kappa_8$. This implies $x=0$ and completes the proof.
\end{proof}

\begin{lem}\label{Fa2}
$p_{U\ast}\Delta(\overline{\nu_6\kappa_9}) =p_{U\ast}\Delta\{\nu_6, \eta_9,
 \sigma_{10}\nu^2_{17}p'_{23} \}_2 =\nu_5\bar\nu_8\overline{\nu^2}_{16}$. 
\end{lem}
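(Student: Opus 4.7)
The first equality is exactly Lemma~\ref{32}, so the content of the lemma is to pin down the single representative $\nu_5\bar\nu_8\overline{\nu^2}_{16}$ inside the coset $p_{U\ast}\Delta\{\nu_6,\eta_9,\sigma_{10}\nu^2_{17}p'_{23}\}_2$. The plan is to mimic the proof of Lemma~\ref{l1}: push $p_{U\ast}\Delta$ inside the bracket, extract $\nu_5$ as a left factor, identify a candidate representative via Lemma~\ref{lA}, and then fix the residual extension ambiguity by restricting along $i'_{23}$.

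Concretely, using $\Delta\nu_6=[2\iota_5]\nu_5$ from \cite[Proposition 6.2]{Mi2} (so $p_{U\ast}\Delta\nu_6=2\nu_5$) together with the standard naturality of Toda brackets under the connecting map of $SU(3)\to G_2\to S^6$, I would obtain the containment
\[
p_{U\ast}\Delta\{\nu_6,\eta_9,\sigma_{10}\nu^2_{17}p'_{23}\}_2 \subset \{2\nu_5,\eta_8,\sigma_9\nu^2_{16}p'_{22}\}_1.
\]
Writing $2\nu_5=\nu_5\circ 2\iota_8$ and applying the composition rule $\{\alpha\beta,\gamma,\delta\}\supset\alpha\circ\{\beta,\gamma,\delta\}$ extracts $\nu_5$ from the outer bracket, while a single suspension of Lemma~\ref{lA} evaluates the inner bracket as $\{2\iota_8,\eta_8,\sigma_9\nu^2_{16}p'_{22}\}_1=\bar\nu_8\overline{\nu^2}_{16}$. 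Hence $\nu_5\bar\nu_8\overline{\nu^2}_{16}$ is a representative of the coset.

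To upgrade this to the stated equality, I would precompose with $i'_{23}$. On the one side, $p_{U\ast}\Delta(\overline{\nu_6\kappa_9})\circ i'_{23}=p_{U\ast}\Delta(\nu_6\kappa_9)=2\nu_5\kappa_8$ by \cite[Proposition 6.2]{Mi2}. On the other, $\nu_5\bar\nu_8\overline{\nu^2}_{16}\circ i'_{23}=\nu_5\bar\nu_8\nu^2_{16}=\nu_5\cdot 2\kappa_8=2\nu_5\kappa_8$, since $\bar\nu_n\nu^2_{n+8}=2\kappa_n$ under the paper's convention. Thus the two agree after restriction, and $p_{U\ast}\Delta(\overline{\nu_6\kappa_9})-\nu_5\bar\nu_8\overline{\nu^2}_{16}$ lies in the image $\pi^5_{23}\circ p'_{23}$ coming from the long exact cofibration sequence.

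The main obstacle, exactly as in Lemma~\ref{l1}, is eliminating this residual $\pi^5_{23}\circ p'_{23}$-contribution. I would enumerate the possible generators of the indeterminacy of $\{2\nu_5,\eta_8,\sigma_9\nu^2_{16}p'_{22}\}_1$ lying in $\pi^5_{23}\circ p'_{23}$ (candidates such as $\nu_5\bar\varepsilon_8 p'_{23}$ and $\zeta_5\sigma_{16}p'_{23}$) from Toda's tables, and rule each out by a secondary test: composing on the left with elements such as $\eta_5$ or $\nu_5$ and invoking the vanishings $\nu_5\eta_8=0$, $\bar\nu_6\varepsilon_{14}=0$, and Lemma~\ref{21}(3), in the same spirit in which Lemma~\ref{l1} used Lemma~\ref{42} to kill its $\mu_5\sigma_{14}\bar\eta_{21}$ coefficient. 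No individual check is deep, but the bookkeeping through Toda's tables for $\pi^5_{23}$ is the most delicate point of the proof.
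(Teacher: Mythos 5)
Your skeleton through the identification of the representative matches the paper exactly: the first equality is Lemma~\ref{32}, naturality of $\Delta$ gives the containment in $\{2\nu_5,\eta_8,\sigma_9\nu^2_{16}p'_{22}\}_1$, and writing $2\nu_5=\nu_5\circ 2\iota_8$ and suspending Lemma~\ref{lA} produces $\nu_5\bar\nu_8\overline{\nu^2}_{16}$ as an element of that coset. The gap is in how you convert the containment into an equality. The paper does not need any restriction along $i'_{23}$ or any case analysis in $\pi^5_{23}$: it simply computes the indeterminacy of the outer bracket, namely $2\nu_5\circ E[M^{22},S^7]+\pi^5_{10}\circ\sigma_{10}\nu^2_{17}p'_{23}$, and shows both summands vanish --- the first because $[M^{22},S^7]\cong(\Z_2)^6$ by Lemma~\ref{M22} (so multiplication by $2$ annihilates it), the second because $\pi^5_{10}\circ\sigma_{10}\nu^2_{17}=\{\nu_5\eta^2_8\sigma_{10}\nu^2_{17}\}=0$. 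Since the indeterminacy is zero, the coset is the single element $\nu_5\bar\nu_8\overline{\nu^2}_{16}$ and the lemma follows immediately.

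Your proposed substitute for this step is both incomplete and likely unworkable as stated. Restricting along $i'_{23}$ only controls the difference modulo all of $\pi^5_{23}\circ p'_{23}$, a group considerably larger than the actual indeterminacy, and you then defer the elimination of its generators to unspecified ``secondary tests.'' Left composition with $\eta_5$ or $\nu_5$ is not a detection mechanism for an element of $[M^{23},S^5]$ (one can only precompose with maps into $S^5$ or postcompose with maps out of it, and postcomposition is far from injective on the relevant summands, e.g.\ on $\nu_5\bar\varepsilon_8p'_{23}$); note that in Lemma~\ref{l1} the residual term is killed by restriction along $i'_{23}$ together with the independently established value of $p_{U\ast}\Delta(P(E\theta))$ (Lemma~\ref{42}), not by left composition. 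So the decisive step of the proof is missing from your proposal, and the route you sketch for it would need to be replaced by the paper's direct indeterminacy computation (or something equally concrete) to close the argument.
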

\begin{proof}
By Lemma~\ref{32} and Lemma~\ref{lA}, we have 
\[\begin{split}
p_{U\ast}\Delta(\overline{\nu_6\kappa_9}) = p_{U\ast}\Delta\{\nu_6, \eta_9,
 \sigma_{10}\nu^2_{17}p'_{23} \}_2 \subset \{p_{U\ast}\Delta\nu_6,
 \eta_8,  \sigma_{9}\nu^2_{16}p'_{22} \}_1 & \\
=\{2\nu_5, \eta_8,
 \sigma_{9}\nu^2_{16}p'_{22}\}_1 \supset \nu_5 \circ E\{2\iota_7,
 \eta_7, \sigma_8\nu^2_{15}p'_{21}\} = \nu_5\bar\nu_8
 \overline{\nu^2}_{16}. & \\
\bmod \quad 2\nu_5 \circ E[M^{22}, S^7]+\pi^5_{10} \circ
 \sigma_{10}\nu^2_{17}p'_{23}
\end{split}\]
Since $\pi^5_{10} \circ
 \sigma_{10}\nu^2_{17}=\{\nu_5\eta_8^2\sigma_{10}\nu^2_{17}\}=\{\nu_5(\bar\nu_6
 + \varepsilon_8)\eta_{16}\nu^2_{17}\}=0$, we have the equation 
$p_{U\ast}\Delta(\overline{\nu_6\kappa_9}) =\nu_5\bar\nu_8\overline{\nu^2}_{16} \bmod 2\nu_5 \circ E[M^{22}, S^7]+\pi^5_{10} \circ
 \sigma_{10}\nu^2_{17}p'_{23}=0$. 
\end{proof}

Finally, we show :
\begin{thm}\label{M1}
\begin{equation}\label{MM}
 \Delta(\overline{P(E\theta)}+\overline{\nu_6\kappa_9}) =
 [\nu_5\bar\varepsilon_8]p'_{23} +
 b[2\iota_5] \zeta_5 \sigma_{16} p'_{23},
\end{equation}
where $b \in \{0,1,2,3\}$. 
\end{thm}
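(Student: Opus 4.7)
The plan is to pin down the coefficient $a$ in Lemma~\ref{Fa1}, which already furnishes
\[
\Delta(\overline{P(E\theta)}+\overline{\nu_6\kappa_9}) = a[\nu_5\bar\varepsilon_8]p'_{23} + b[2\iota_5]\zeta_5\sigma_{16}p'_{23}
\]
with $a \in \{0,1\}$ and $b \in \{0,1,2,3\}$. Since Theorem~\ref{M1} leaves $b$ free, the only remaining task is to show $a = 1$, and for this I will push both sides through $p_{U\ast}$ and compare them with the identities already established.

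Adding the identities of Lemmas~\ref{l1} and \ref{Fa2} gives
\[
p_{U\ast}\Delta(\overline{P(E\theta)}+\overline{\nu_6\kappa_9}) = 2\nu_5\bar\nu_8\overline{\nu^2}_{16} + \nu_5\bar\varepsilon_8 p'_{23}.
\]
Lemma~\ref{21}(1) asserts $\sharp(\overline{\nu^2}_{16}) = 2$, and since left composition is a group homomorphism on $[M^{23},-]$, I conclude $2\nu_5\bar\nu_8\overline{\nu^2}_{16} = \nu_5\bar\nu_8 \circ (2\overline{\nu^2}_{16}) = 0$, so the left-hand side reduces to $\nu_5\bar\varepsilon_8 p'_{23}$. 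On the right, $p_{U\ast}[\nu_5\bar\varepsilon_8] = \nu_5\bar\varepsilon_8$, $p_{U\ast}[2\iota_5] = 2\iota_5$, and (\ref{j1}) gives $2\iota_5 \circ \zeta_5\sigma_{16} = 2\zeta_5\sigma_{16}$, producing $a\nu_5\bar\varepsilon_8 p'_{23} + 2b\zeta_5\sigma_{16}p'_{23}$. Equating the two sides yields
\[
(1-a)\nu_5\bar\varepsilon_8 p'_{23} = 2b\zeta_5\sigma_{16}p'_{23} \quad \text{in} \quad [M^{23}, S^5].
\]

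The main obstacle is extracting $a = 1$ from this identity. I will use the cofibre exact sequence (\ref{cohomotopy}) for $n = 22$: the kernel of the right-composition map $(-)\circ p'_{23}: \pi^5_{23} \to [M^{23}, S^5]$ is $2\pi^5_{23}$, so the displayed equation is equivalent to $(1-a)\nu_5\bar\varepsilon_8 - 2b\zeta_5\sigma_{16} \in 2\pi^5_{23}$. The case $a = 0$ would force $\nu_5\bar\varepsilon_8 \in 2\pi^5_{23}$; this I must rule out by appealing to Toda's tables, which show that $\nu_5\bar\varepsilon_8$ generates a $\Z_2$ direct summand of $\pi^5_{23}$ and is therefore not divisible by $2$. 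This forces $a = 1$ and completes the proof.
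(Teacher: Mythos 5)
Your argument is correct and takes essentially the same route as the paper's proof: both determine $a=1$ in Lemma~\ref{Fa1} by applying $p_{U\ast}$, combining Lemmas~\ref{l1} and~\ref{Fa2} with $2\overline{\nu^2}_{16}=0$ on one side and (\ref{j1}) on the other, and concluding from the fact that $\nu_5\bar\varepsilon_8$ generates a $\Z_2$ direct summand of $\pi^5_{23}$ that $\nu_5\bar\varepsilon_8 p'_{23}\neq 0$ in $[M^{23},S^5]$. The only cosmetic difference is that the paper observes $2(\zeta_5\sigma_{16})p'_{23}=0$ at once, whereas you carry that term along and absorb it into $2\pi^5_{23}$ via the kernel of $(-)\circ p'_{23}$.
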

\begin{proof}
Since the order of $\nu_5\bar\nu_8\overline{\nu^2}_{16}$ is $2$, in
Lemma~\ref{l1} and Lemma~\ref{Fa2}, we have 
\begin{equation}\label{Se1}
 {p_U}_*(\Delta
 \overline{P(E\theta)}+\Delta(\overline{\nu_6\kappa_9}))=\nu_5\bar{\varepsilon}_8p'_{23}.
\end{equation}
We know $p_{U\ast}([2\iota_5]\zeta_5\sigma_{16})p'_{23}=2(\zeta_5\sigma_{16})p'_{23}=0$
 by (\ref{j1}). So, Lemma~\ref{Fa1} induces  
\begin{equation}\label{Se}
{p_U}_*(\Delta
 (\overline{P(E\theta)}+\overline{\nu_6\kappa_9}))={p_U}_*(a[\nu_5\bar{\varepsilon}_8]p'_{23}
 +  b[2\iota_5]\zeta_5\sigma_{16}p'_{23})=a \nu_5\bar{\varepsilon}_8p'_{23},
\end{equation}
where $a \in \{0,1\}$ and $b \in \{0,1,2,3\}$. 

By (\ref{Se1}) and (\ref{Se}), we have $\nu_5\bar\varepsilon_8p'_{23} = a
 \nu_5\bar\varepsilon_8p'_{23}$. 
Since $\nu_5\bar{\varepsilon}_8$ generates the direct summand $\Z_2$ of
$\pi^5_{23}$, $\nu_5\bar\varepsilon_8p'_{23}$ is not zero in $[M^{23},
 S^5]$. This implies $a=1$. 
\end{proof}

Thus, by applying $i_{G\ast}$ to (\ref{MM}), we have
$i_{G\ast}[\nu_5\bar\varepsilon_8]p'_{23}=0$. 
By (\ref{cohomotopy}), the relation $i_{G\ast}[\nu_5\bar\varepsilon_8]p'_{23}=0$ implies 
$i_{G\ast}[\nu_5\bar\varepsilon_8]$ is contained in the image of the map
$2\iota_{23}^{\ast} : \pi_{23}(G_2 : 2) \to \pi_{23}(G_2 :2)$. This and
Proposition 1.2 complete the proof of Theorem 1.3.

As an application of Proposition~\ref{G20} and Theorem~\ref{pi23G_2}, we show
\begin{prop}
$\pi_{23}(V_{7,2}:2) \cong (\Z_4)^2 \oplus (\Z_2)^2$. 
\end{prop}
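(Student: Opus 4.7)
The plan is to analyze $\pi_{23}(V_{7,2}:2)$ through the diffeomorphism $V_{7,2}\cong G_2/SU(2)$ (the stabilizer of an orthonormal $2$-frame in $\R^7$ under the $G_2$-action on $S^6$ is $SU(3)$, and the further stabilizer of a unit vector in $\C^3\cong\R^6$ under $SU(3)$ is $SU(2)$). Writing $\tilde i=i_G\circ i_U\colon SU(2)\hookrightarrow SU(3)\hookrightarrow G_2$, the long exact sequence of the resulting fibration $SU(2)\to G_2\to V_{7,2}$ contains the segment
\[
\pi^3_{23}\xrightarrow{\tilde i_\ast}\pi_{23}(G_2:2)\xrightarrow{q_\ast}\pi_{23}(V_{7,2}:2)\xrightarrow{\partial}\pi^3_{22}\xrightarrow{\tilde i_\ast}\pi_{22}(G_2:2).
\]

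By Theorem~\ref{pi23G_2} together with Proposition~\ref{G20}, $\pi_{23}(G_2:2)\cong\Z_4\{\langle P(E\theta)+\nu_6\kappa_9\rangle\}\oplus\Z_2\{\langle\eta_6\mu_7\rangle\sigma_{16}\}$. I would first identify the image of the left $\tilde i_\ast$ by factoring it through $\pi_{23}(SU(3):2)=\Z_4\{[2\iota_5]\zeta_5\sigma_{16}\}\oplus\Z_2\{[\nu_5\bar\varepsilon_8]\}$ (from \cite[Theorem 4.1]{MT2}) and using Mimura's descriptions of $i_{U\ast}$ and $i_{G\ast}$ from \cite[Propositions 6.2, 6.3]{Mi2}. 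Symmetrically, I would compute $\Ker(\tilde i_\ast\colon\pi^3_{22}\to\pi_{22}(G_2:2))$ using Toda's \cite{T} tables for $\pi^3_{22}$ and Mimura's \cite{Mi2} determination of $\pi_{22}(G_2:2)$. Comparison of orders should yield a short exact sequence
\[
0\to\Z_4\oplus\Z_2\to\pi_{23}(V_{7,2}:2)\to\Z_4\oplus\Z_2\to 0,
\]
with $\Coker(\tilde i_\ast|_{\pi^3_{23}})\cong\Z_4\oplus\Z_2$ on the left and $\Ker(\tilde i_\ast|_{\pi^3_{22}})\cong\Z_4\oplus\Z_2$ on the right.

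The main obstacle is resolving this extension, since a priori the two $\Z_4$ summands could combine into a $\Z_8$. To exclude this I would work with a second fibration in parallel, namely the Stiefel fibration $S^5\to V_{7,2}\to S^6$ (the unit tangent bundle of $S^6$, whose Euler number is $2$), placed alongside the $SU(2)$-fibration via the commutative diagram induced by $SU(3)\to G_2$ together with $SU(3)/SU(2)=S^5\to V_{7,2}$. The Stiefel connecting map $\pi_k^6\to\pi_{k-1}^5$ is controlled by the Whitehead product $[\iota_5,-]$ and the Euler class, and combined with the relation $2\langle P(E\theta)+\nu_6\kappa_9\rangle=i_{G\ast}[\nu_5\bar\varepsilon_8]$ of Theorem~\ref{pi23G_2}, it should pin down the orders of $\partial$-lifts of the generators from $\pi^3_{22}$ and force the splitting $\pi_{23}(V_{7,2}:2)\cong(\Z_4)^2\oplus(\Z_2)^2$.
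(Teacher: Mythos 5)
Your setup coincides with the paper's: both use the fibration $S^3=SU(2)\to G_2\to V_{7,2}$, the resulting exact sequence, the computation $\pi_{23}(G_2:2)\cong\Z_4\oplus\Z_2$ from Proposition~\ref{G20} and Theorem~\ref{pi23G_2}, and Toda's $\pi^3_{22}=\{\bar\mu',\nu'\mu_6\sigma_{15}\}\cong\Z_4\oplus\Z_2$, arriving at the same short exact sequence. You also correctly identify the crux: whether the $\Z_4\{\bar\mu'\}$ on the right and the $\Z_4$ on the left assemble into a $\Z_8$.

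The gap is that you do not actually resolve that extension; you only assert that the Stiefel fibration $S^5\to V_{7,2}\to S^6$ together with the Euler class ``should pin down the orders.'' This is not a workable mechanism as stated: the element $\bar\mu'$ that must be lifted lives in $\pi_{22}$ of the fiber $S^3$, and neither the projection $V_{7,2}\to S^6$ nor the connecting map $\pi^6_k\to\pi^5_{k-1}$ of the tangent sphere bundle sees it directly, so nothing in your outline produces a lift of $\bar\mu'$ of order exactly $4$. The paper's argument at this point is a genuine construction: it uses the cell structure $V_{7,2}=M^6\cup e^{11}$ and the inclusion $i''\colon M^6\to V_{7,2}$ (with $\Delta(i'')=\pm\bar\eta_3$), the coextension $\tilde\mu_5\in\pi_{16}(M^6)$ with $\Delta(i''\tilde\mu_5)=\pm\bar\eta_3\tilde\mu_4=\pm\mu'$, and the bracket decomposition $\bar\mu'\in\{\mu',4\iota_{14},4\sigma_{14}\}_1$ of Lemma~\ref{poy}(2); it then takes $\mathfrak{m}\in\{i''\tilde\mu_5,4\iota_{15},4\sigma_{22}\}_2$, checks $\Delta(\mathfrak{m})\equiv\pm\bar\mu'\bmod\nu'\mu_6\sigma_{15}$, and kills $4\mathfrak{m}$ by shuffling: $4\mathfrak{m}\in i''\tilde\mu_5\circ\{4\iota_{15},4\sigma_{15},4\iota_{22}\}_2\subset i''\tilde\mu_5\circ 2\pi^{15}_{23}=0$. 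Without this (or an equivalent order-$4$ lift of $\bar\mu'$), the isomorphism type $(\Z_4)^2\oplus(\Z_2)^2$ is not established. A secondary omission: you should also verify that $\pi^3_{23}=\{\nu'\bar\mu_6,\nu'\eta_6\mu_7\sigma_{16}\}$ maps trivially to $\pi_{23}(G_2:2)$ and that $\pi^3_{22}$ maps trivially to $\pi_{22}(G_2:2)$, so that the sequence is short exact at both ends; you flag these as computations to be done but do not carry them out.
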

To show this proposition, we will need some relations. 
lemma. 
\begin{lem}\label{poy}
\begin{itemize}
\item[(1)] $\mu' \in \{\eta_3, 2\iota_4, \mu_4\}_1 \bmod 2\mu'$ 
\item[(2)] $\bar\mu' \in \{\mu', 4\iota_{14}, 4\sigma_{14}\}_1  \bmod \nu'\mu_6\sigma_{15}$
\end{itemize}
\end{lem}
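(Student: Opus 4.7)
The plan is to derive both Toda-bracket identifications from the defining brackets of the $\mu$-family in \cite{T}, combined with the relation $2\mu' = \eta_3^2\mu_5$ recorded at \cite[(7.7)]{T}.

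For (1), I would first check that $\{\eta_3, 2\iota_4, \mu_4\}_1$ is well defined in the ``$_1$-sense'' of \cite{T}: $\eta_3 \circ 2\iota_4 = 2\eta_3 = 0$, and the subscript $_1$ is designed precisely to absorb the unstable contribution of $2\mu_4 = \eta_4^2\mu_6$. Next I would show that $\mu'$ lies in the bracket by unwinding the defining Toda description of $\mu_n$ from \cite[Chapter VII]{T} and applying the Jacobi identity to transfer an $\eta$ to the outer left factor. Finally, the indeterminacy is
\[
\eta_3\circ\pi_{14}^4 + \pi_5^3\circ\mu_5 \;=\; \eta_3\circ\pi_{14}^4 + \{\eta_3^2\mu_5\} \;=\; \eta_3\circ\pi_{14}^4 + \{2\mu'\},
\]
using $\pi_5^3 = \Z_2\{\eta_3^2\}$. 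I would then read $\pi_{14}^4$ off Toda's tables and verify that $\eta_3\alpha\in\{0,2\mu'\}$ for each generator $\alpha$, yielding $\mu'\in\{\eta_3, 2\iota_4, \mu_4\}_1 \bmod 2\mu'$.

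For (2) the approach is parallel. Here $\bar\mu_n \in \{\mu_n, 4\iota, 4\sigma\}$ is the standard Toda-bracket description of $\bar\mu$ (see \cite[Chapter XII]{T}), and $\bar\mu' \in \pi_{22}^3$ is its unstable analog. The bracket $\{\mu', 4\iota_{14}, 4\sigma_{14}\}_1$ is defined because $4\mu' = 2(\eta_3^2\mu_5) = 0$ and $16\sigma_{14} = 0$. I would show that $\bar\mu'$ belongs to it by naturality, transporting Toda's description of $\bar\mu$ down one suspension and using the $_1$ convention to dispose of obstructions from the unstable range. The indeterminacy
\[
\mu'\circ\pi_{22}^{14} \;+\; \pi_{15}^3\circ 4\sigma_{15}
\]
is then evaluated: from $\pi_{15}^3 = \Z_2\{\nu'\mu_6\}$ the second summand contributes multiples of $\nu'\mu_6\sigma_{15}$, while $\mu'\circ\pi_{22}^{14}$, with $\pi_{22}^{14}$ generated $2$-locally by $\bar\nu_{14}$, $\varepsilon_{14}$ and a small amount of torsion, is handled by the composition relations in \cite[Lemma 12.10]{T} and its neighbors, which force $\mu'\bar\nu_{14}$ and $\mu'\varepsilon_{14}$ to collapse into multiples of $\nu'\mu_6\sigma_{15}$.

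The main obstacle will be the indeterminacy analysis in (2): one must confirm that every element of $\mu'\circ\pi_{22}^{14}$ and of $\pi_{15}^3\circ 4\sigma_{15}$ is in fact a multiple of the single generator $\nu'\mu_6\sigma_{15}$, which requires careful bookkeeping in Toda's 19-stem tables and use of the specific unstable relations satisfied by $\mu'$. A secondary, more technical, hurdle is pinning down the ``$_1$'' convention precisely, to be sure both brackets are defined in the sense claimed rather than only up to a further desuspension ambiguity; in the stable analog both relations are routine, and the work consists entirely in tracking the single-suspension deviation from the stable situation.
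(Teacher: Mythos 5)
Your overall architecture agrees with the paper's: membership of $\mu'$ and $\bar\mu'$ in the respective brackets is taken from Toda (the paper cites \cite[p.137, Lemma 12.4, Theorem 12.9]{T} for $\bar\mu'\in\{\mu',4\iota_{14},4\sigma_{14}\}_1$), and the entire content of the lemma is the computation of the indeterminacies. But that is exactly where your proposal stops short: the two decisive vanishing statements are deferred to ``reading off Toda's tables'' and to ``\cite[Lemma 12.10]{T} and its neighbors,'' and neither is in fact a table lookup or a recorded composition relation. In (1) the nontrivial generator to kill is $\eta_3\circ E\varepsilon'$ (the other generator $\eta_3\mu_4$ of $\pi^3_{13}$ gives $\eta_3\eta_4\mu_5=2\mu'$, which is harmless). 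The paper proves $\eta_3 E\varepsilon'=0$ by a genuine Toda-bracket argument: $\varepsilon'\in\{\nu',2\nu_6,\nu_9\}$, so $\eta_3\circ(-E\varepsilon')\in\{\eta_3,E\nu',2\nu_7\}_1\circ\nu_{11}\supset\{\eta_3,E\nu',\nu_7\}_1\circ 2\nu_{11}\ni\varepsilon_3\circ 2\nu_{11}=0$, with zero indeterminacy since $\eta_3\circ\pi^4_{11}\circ\nu_{11}=0$. In (2) the nontrivial step is $\mu'\bar\nu_{14}=0$, which the paper gets from $\mu'\nu_{14}=0$, $\bar\nu_{14}=\{\nu_{14},\eta_{17},\nu_{18}\}$, the shift $\mu'\circ\{\nu_{14},\eta_{17},\nu_{18}\}=\{\mu',\nu_{14},\eta_{17}\}\circ\nu_{19}\subset\pi^3_{19}\circ\nu_{19}$, and then $\pi^3_{19}\circ\nu_{19}=0$ via $\sigma_{12}\nu_{19}=0$ and $\eta_3\bar\varepsilon_4\nu_{19}=\bar\varepsilon_3\eta_{18}\nu_{19}=0$. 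Lemma 12.10 of \cite{T} (which concerns $\eta_n\bar\varepsilon_{n+1}$) does not supply either of these; as written, your argument has a gap precisely at the point the lemma is meant to settle.

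Two smaller inaccuracies: the indeterminacy of a bracket with subscript $1$ has first summand $\eta_3\circ E\pi^3_{13}$ (resp.\ $\mu'\circ E\pi^{13}_{21}$), not $\eta_3\circ\pi^4_{14}$ (resp.\ $\mu'\circ\pi^{14}_{22}$); in (2) this is harmless because $\pi^{13}_{21}$ is stable, but in (1) you would be enlarging the set of elements to check. Also $\pi^3_{15}\cong(\Z_2)^2$ (not $\Z_2\{\nu'\mu_6\}$); the relevant point is simply that $\pi^3_{15}\circ 4\sigma_{15}=0$ because $\pi^3_{15}$ has exponent $2$, which is how the paper disposes of that summand.
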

\begin{proof}
(1) The indeterminacy is $\eta_3 \circ E\pi^3_{13}+\pi^3_5 \circ
 \mu_5$. Here, $\pi^3_5 \circ \mu_5 =\{\eta^2_3\mu_5\}=\{2\mu'\}$ and
 $\eta_3 \circ E\pi^3_{13}=\eta_3 \circ E\{\eta_3\mu_4, \varepsilon'\}=\{2\mu', \eta_3E\varepsilon'\}$. By \cite[p.68]{T}, we have $- (\eta_3 \circ
 E\varepsilon')= \eta_3 \circ
 (-E\varepsilon') \in \eta_3 \circ (-E\{\nu', 2\nu_6, \nu_9\}) \subset
 \eta_3 \circ \{E\nu', 2\nu_7, \nu_{10}\}_1 = \{\eta_3, E\nu', 2\nu_7\}_1
 \circ \nu_{11} \supset \{\eta_3, E\nu', \nu_7\}_1 \circ 2\iota_{11} \circ
 \nu_{11} \ni \varepsilon_3 \circ 2\iota_{11} \circ \nu_{11}=0$. The
 indeterminacy is $\eta_3 \circ \pi^4_{11} \circ \nu_{11} =
 0$. Therefore, we have $\eta_3 \circ E\varepsilon'=0$, and the
 assertion is proved. 

(2) The indeterminacy is $\mu' \circ E\pi^{13}_{21} + \pi^3_{15} \circ
 4\sigma_{15}$. Since $\pi^3_{15} \cong (\Z_2)^2$~\cite[Theorem 7.6]{T},
 we have $\pi^3_{15} \circ 4\sigma_{15}=0$. 

We have $\mu' \circ E\pi^{13}_{21}=\{\mu'\eta_{14}\sigma_{15}, \mu'\bar\nu_{14}\}$. 
Since $\mu'\nu_{14}=0$~\cite[Proposition 2.4 (1)]{Og} and
 $\bar\nu_{14}=\{\nu_{14}, \eta_{17}, \nu_{18}\}$~\cite[Lemma 6.2]{T},
 we have $\mu'\bar\nu_{14}=\mu' \circ \{\nu_{14}, \eta_{17},
 \nu_{18}\}=\{\mu', \nu_{14}, \eta_{17}\} \circ \nu_{19} \subset
 \pi^3_{19} \circ \nu_{19}$. 
By \cite[Theorem 12.6]{T}, $\pi^3_{19}=\{\mu_3\sigma_{12},
 \eta_3\bar\varepsilon_4\}$. 
We also have $\eta_3\bar\varepsilon_4=\bar\varepsilon_3\eta_{18}$ by (\ref{j7}),
 $\eta_{18}\nu_{21}=0$ and (\ref{rn}). 
It follows that $\pi^3_{19} \circ \nu_{19}=0$,
 and hence $\mu'\bar\nu_{14}=0$. 

By \cite[Proposition 2.2 (4) ; the second]{Og}, we have
 $\mu'\eta_{14}=\nu'\mu_6$, so that
 $\mu'\eta_{14}\sigma_{15}=\nu'\mu_6\sigma_{15}$. 
\end{proof}

{\bf Proof of Proposition 4.9}

By using the fibering $S^3 \rarrow{i} G_2 \rarrow{p} V_{7,2}$,
we have an exact sequnece:
\[
 \cdots \to \pi^3_{23} \to \pi_{23}(G_2:2) \to \pi_{23}(V_{7,2}:2) \to \pi^3_{22} \to \cdots. 
\]
Here, $i_{\ast}\pi_{23}(G_2 : 2)=\{i_{\ast}\langle
 P(E\theta)+\nu_6\kappa_9 \rangle, i_{\ast}\langle \eta^2_6 \rangle
 \mu_8\}$. 

Recall that the structures of the homotopy groups $\pi_{23}^3=\{\nu'\bar\mu_6, \nu'\eta_6\mu_7\sigma_{16}\} \cong
 (\Z_2)^2$ from \cite[p.45]{MT1} and $\pi^3_{22}=\{\bar\mu', \nu'\mu_6\sigma_{15}\}\cong \Z_4 \oplus \Z_2$ from \cite[Theorem 12.9]{T}, where we have $\bar\mu' \in \{\mu', 4\iota_{14}, 4\sigma_{14}\}_1$ and $2\bar\mu' = \eta^2_3\bar\mu_5$ by \cite[p.137, Lemma 12.4, Theorem 12.9]{T}. 

Let us denote the connecting map induced by the fibering $G_2
 /S^3=V_{7,2}$ by $\Delta : \pi_n(V_{7,2} : 2) \to \pi^3_{n-1}$. 
By \cite[p.132]{Mi2}, we obtain 
\begin{equation}\label{i1}
\pi_k(G_2:2)=0 \quad \text{for} \quad 4 \le k \le 7 . 
\end{equation}
Therefore, $\Delta: \pi_5(V_{7,2}:2) \to \pi^3_4$ is an isomorphism. 
Since $V_{7,2}$ is a $5$-sphere bundle over $S^6$, we have a cell
 structure : $V_{7, 2}=M^6 \cup e^{11}$. So, we obtain $\pi_5(V_{7,2} :2) \cong
 \pi_5(M^6 :2)=\{i'_6\} \cong \Z_2$ and $\Delta(i''i'_6)=\eta_3$, where $i'': M^6 \to V_{7,2}$ is the 
inclusion. We also have an isomorphism $\Delta: \pi_7(V_{7,2}:2) \to
\pi^3_6$. 
Since $\pi_7(V_{7,2}:2) \cong \pi_7(M^6:2)=\{\tilde{\eta}_5\} \cong \Z_4$, we
have $\Delta(i''\tilde{\eta}_5)=\nu'$. 

Since $(V_{7,2}, M^6)$ is $10$-connected and $[M^6, M^6]=\Z_4\{\iota_{M^6}\}$, 
$i''_{\ast} : [M^6, M^6] \to [M^6, V_{7,2}]$ is an isomorphism. 
By (\ref{i1}), we have $[M^n, G_2]=0$ for
 $n=5, 6$. So, the fibering $G_2 / S^3=V_{7, 2}$ induces an isomorphism
 $\Delta : [M^6, V_{7,2}] \to [M^5, S^3]$. This and (\ref{hoco}) imply  
\begin{equation}\label{i2}
\Delta(i'') = \pm \bar\eta_3 . 
\end{equation}
Let $\tilde{\mu}_3 \in \pi_{13}(M^4)$ be a coextension of $\mu_3$. We
 set $\tilde{\mu}_n =E^{n-3} \tilde{\mu}_3 \in \pi_{n+10}(M^{n+1})$ for
 $n \ge 3$. By (\ref{ietap}), we obtain
\begin{equation}\label{i3}
2\tilde{\mu}_n = i'_{n+1}\eta_np'_{n+1}\tilde{\mu}_n = i'_{n+1}\eta_n\mu_{n+1} \quad
 \text{for} \quad n \ge 4 .
\end{equation}
By Lemma~\ref{poy} (1), 
we have 
\begin{equation}\label{i4}
\Delta(i''\tilde{\mu}_5)=\pm \bar\eta_3\tilde{\mu}_4 =\pm \mu' .
\end{equation}
By (\ref{ietap}) and (\ref{i3}), we have $4i''\tilde{\mu}_5=0$. This
 implies $\sharp(i''\tilde{\mu}_5)=4$. 
Let $\mathfrak{m}$ be a representative of the Toda bracket $\{i''\tilde{\mu}_5,
 4\iota_{15}, 4\sigma_{22}\}_2$. Then, we have $4\mathfrak{m} \in
 4\{i''\tilde{\mu}_5, 4\iota_{15}, 4\sigma_{22}\}_2= \{i''\tilde{\mu}_5,
 4\iota_{15}, 4\sigma_{22}\}_2 \circ 4\iota_{30} = i''\tilde{\mu}_5
 \circ \{4\iota_{15}, 4\sigma_{15}, 4\iota_{22}\}_2 \subset
 i''\tilde{\mu}_5 \circ \{2\iota_{15}, 0,
 2\iota_{22}\}_1=i''\tilde{\mu}_5 \circ 2\pi^{15}_{23}=0$. 
We recall from \cite[p.137]{T} : 
By (\ref{i4}) and Lemma~\ref{poy}
, we have $\Delta\{i''\tilde{\mu}_5,
 4\iota_{15}, 4\sigma_{15}\}_2 \subset \{\pm \mu', 4\iota_{14},
 4\sigma_{14}\}_1 =\{(\pm \iota_3) \circ \mu', 4\iota_{14},
 4\sigma_{21}\}_1  \ni (\pm \iota_3) \circ \bar\mu'= \pm \bar\mu' \bmod
 \mu' \circ E\pi^{13}_{21} + \pi^3_{15} \circ 4\sigma_{15}=\{\nu'\mu_6\sigma_{15}\}$. Here, $(\pm
 \iota_3) \circ \bar\mu' = \pm \bar\mu'$ since $S^3$ is an $H$-space. 
Therefore, $\Delta(\mathfrak{m}) \equiv \pm \bar\mu' \bmod \nu'\mu_6\sigma_{15}$. 

Hence, we have the split exact sequence:
\[
 0 \to \pi_{23}(G_2:2) \to \pi_{23}(V_{7,2} :2) \to \pi^3_{22} \to 0 ,  
\]
so that the splitting $\pi_{23}(V_{7,2} : 2)=i_{\ast}\pi_{23}(G_2 : 2)
 \oplus \Z_4\{\mathfrak{m}\} \oplus \Z_2\{i''\tilde{\eta}_5\mu_7\sigma_{16}\} \cong
 (\Z_4)^2 \oplus (\Z_2)^2$ gives the isomorphism in the proposition.

\noindent
Toshiyuki Miyauchi\\
Department of Applied Mathematics, Faculty of Science, Fukuoka 
University\\
8-19-1 Nanakuma, Jonan-ku, Fukuoka 814-0180, Japan\\
tmiyauchi@fukuoka-u.ac.jp\\

\noindent
Juno Mukai\\
Shinshu University\\
3-1-1 Asahi, Matsumoto, Nagano 390-8621, Japan\\
jmukai@shinshu-u.ac.jp\\

\noindent
Mariko Ohara\\
Faculty of Science, Shinshu University\\
3-1-1 Asahi, Matsumoto, Nagano 390-8621, Japan\\
primarydecomposition@gmail.com\\

\end{document}